\documentclass[10pt, a4paper]{amsart}
\usepackage{amsmath,amssymb}
\usepackage{amsthm}
\usepackage[pdftex]{graphicx}
\DeclareGraphicsExtensions{.png,.pdf,.jpg}
\usepackage{color}


\let\abs=\envert

\newcommand{\PUNTOS}{\leaders\hbox to 16.6pt{.\hss}\hfill}
\makeatletter
\newcommand{\puntitos}{\cleaders\hbox{$\m@th \mkern4mu.\mkern4mu$}\hfill}
\newcommand{\puntillos}{\leaders\hbox{$\m@th \mkern3mu.\mkern3mu$}\hfill}
\makeatother

\newtheorem{teor}{Theorem}

\theoremstyle{definition}
\newtheorem{defi}[teor]{Definition}
\newtheorem{corol}[teor]{Corollary}
\newtheorem{defis}[teor]{Definitions}
\newtheorem{lema}[teor]{Lemma}

\newtheorem{propi}[teor]{Property}

\DeclareMathOperator{\orden}{order}

\begin{document}
\title[CZ decomposition and differentiation of integrals on $\mathbb{T}^\omega$]{A decomposition of Calder\'on--Zygmund type and some observations on differentiation of integrals on the infinite-dimensional torus}
\author[E. Fern\'andez]{Emilio Fern\'{a}ndez} 
\address{Departamento de Matem\'{a}ticas y Computaci\'{o}n, 
Universidad de La Rioja, c/ Madre de Dios, 53, 26006 Logro\~no, Spain.}
\email{emfernan@unirioja.es}

\author[L. Roncal]{Luz Roncal}
\address{BCAM -- Basque Center for Applied Mathematics, 48009 Bilbao, Spain and Ikerbasque,
Basque Foundation for Science, 48011 Bilbao, Spain}
\email{lroncal@bcamath.org}

\thanks{The second author is supported by the Basque Government through the BERC 2018-2021 program, by Spanish Ministry of Economy and Competitiveness MINECO: BCAM Severo Ochoa excellence accreditation SEV-2017-2018  and through project MTM2017-82160-C2-1-P funded by (AEI/FEDER, UE) and acronym ``HAQMEC'', and by a 2017 Leonardo grant for Researchers and Cultural Creators, BBVA Foundation. The Foundation accepts no responsibility for the opinions, statements and contents included in the project and/or the results thereof, which are entirely the responsibility of the authors.}

\keywords{Infinite dimensional torus, Calder\'on--Zygmund decomposition, differentiation of integrals, differentiation basis, locally compact groups}

\subjclass[2010]{Primary: 42B05. Secondary: 20E07, 43A70}

\begin{abstract}

In this note we will show a Calder\'on--Zygmund decomposition associated with a function $f\in L^1(\mathbb{T}^{\omega})$. The idea relies on an adaptation of a more general result by J. L. Rubio de Francia in the setting of locally compact groups. Some related results about differentiation of integrals on the infinite-dimensional torus are also discussed. 
\end{abstract}

\maketitle
\thispagestyle{empty}

\section{Introduction}

In  \cite{rdf78}, Jos\'{e} L. Rubio de Francia (JLR) showed a result on differentiation of integrals in the context of a locally compact group $G$, 
that contained a decomposition of Calder\'on--Zygmund type \cite[Ch. I, Lemma 1]{CZdecomp} under certain conditions. In view of the publication date, his study could be a contemporary of the one by Edwards and Gaudry in \cite[Ch. 2]{edwardsgau}. In this note we revisite and adapt those results by JLR to the case of the (compact, abelian) group $\mathbb{T}^\omega$ (the infinite torus) defined below. Indeed, our goals are the following:
\begin{enumerate}
\item Present a decomposition of Calder\'{o}n--Zygmund (CZ) type  in $\mathbb{T}^\omega$, devised by JLR.
\item Observe some issues on differentiation of integrals in $\mathbb{T}^\omega$.
\end{enumerate}

Let $\mathbb{T}=\{z\in\mathbb{C}\,\vert\, \abs{z}=1\}$ be the one-dimensional torus, identified naturally with the interval $[0,1)$ of the real line through
the group isomorphism  $e^{2\pi it}\longleftrightarrow t$. We can also identify $\mathbb{T}$ with the additive quotient group $\mathbb{R}/\mathbb{Z}$.
We denote by $\mathbb{T}^\omega$ 
the compact group formed by the product of countably infinite many copies of $\mathbb{T}$ (\emph{complete direct sum}, \cite[B7.]{rudinfg}). We will call it briefly the \emph{infinite torus}. The operation in the group $\mathbb{T}^\omega$ is the sum (mod 1) of real sequences, with identity element $\bar{0}:=(0,0,\ldots)$. 

For a fixed $n\in\mathbb{N}$ we can write the infinite torus, of points $x=(x_1,x_2,\ldots)$, as the cartesian product
$\mathbb{T}^n\times\mathbb{T}^{n,\omega}$ of an $n$-dimensional torus $\mathbb{T}^n$ of points $x_{(n}=(x_1,\ldots,x_n)$ 
times a copy, but denoted $\mathbb{T}^{n,\omega}$, of the infinite torus itself, of points $x^{(n}=(x_{n+1},x_{n+2},\ldots)$.

We denote by $m$, or $dx$, the Haar measure (translation invariant) on $\mathbb{T}^\omega$, normalized such that $m(\mathbb{T}^\omega)=1$. This measure coincides \cite[\S 22]{hewitstrom} with the measure product of countably infinite many copies of the Lebesgue measure $|\cdot|$  on $\mathbb{T}$, so the basic $m$-measurable sets are the so-called \emph{intervals}, i.e. subsets of $\mathbb{T}^\omega$ of the form $I=\prod_{j\in\mathbb{N}} I_j$, where $I_j$ is an interval of $\mathbb{T}$ for each  $j$, and $\exists\,N\in\mathbb{N}$ such that  $I_{j}=\mathbb{T}$ for all $j>N$. The measure of the interval $I$ is then $m(I)=\int_{\mathbb{T}^\omega} \chi_I(x)\,dx=\prod_{j=1}^N |I_j|$.

The space $\mathbb{T}^\omega$ is metrizable. For instance, the function 
\begin{equation}\label{metrics}
d(x,y)=\sum_{n=1}^\infty\frac{|x_n-y_n|}{2^n}\quad (x,y\in\mathbb{T}^\omega)
\end{equation}
defines a metric in $\mathbb{T}^\omega$ \cite[p. 157]{saks}.
We write $\delta(S):=\sup_{x,y\in S}d(x,y)$ for the diameter of the set  $S\subset\mathbb{T}^\omega$. 

The $\sigma$-algebra $\mathcal{B}$ of Borel sets in $\mathbb{T}^\omega$, which is the smallest $\sigma$-algebra containing the open intervals, coincides with the least $\sigma$-algebra containing the open balls with respect to the metric \eqref{metrics} \cite[II \S 2.4]{shiry}.

The study of Harmonic Analysis on the infinite torus finds a motivation, on one hand, because it constitutes a logical extension of the $n$-dimensional setting in which estimates have to be obtained independent of the dimension $n$. On the other hand, $\{e^{2\pi i x_k}\colon k=1,2,\ldots\}$ is a system of independent random variables uniformly distributed in the complex unit circumference (i.e., of a complexified version of Rademacher's functions), whose natural completion in $L^2$ is the trigonometric system on $\mathbb{T}^\omega$. Then, the Fourier series of infinitely many variables turn out to be the complex analogue of the Walsh series. Fourier series in $\mathbb{T}^\omega$ also have connection with the Dirichlet series \cite{bohr} and with Prediction Theory \cite{helsonlow}. All these issues were already pointed out by JLR in \cite{rdf80} (see also references therein), where he studied pointwise and norm convergence of Fourier series of infinite variables, although the proofs are just sketched.

There is considerable interest on the infinite torus from the point of view of Potential Theory, see \cite{bendikov,b3, b1,b2,berg}. Apart from this, problems of approximation theory on $\mathbb{T}^\omega$ have been analyzed for instance in \cite{Platonov}. 

The JLR decomposition of Calder\'on--Zygmund type in $\mathbb{T}^\omega$ will be shown in Section \ref{sec:condi} (see Subsection \ref{subsec:CZdes}), and the issues related to differentiation of integrals in  $\mathbb{T}^\omega$ are contained in Section \ref{sec:dif}. To be precise, we will look at three differentiation bases. First, the \textit{Rubio de Francia restricted  basis} $\mathcal{R}_0$, which is the family associated to the Calder\'on--Zygmund decomposition in Section  \ref{sec:condi} and that differentiates $L^1(\mathbb{T}^\omega)$, see Corollary \ref{cor:RDFr}. Second, the \textit{Rubio de Francia basis} $\mathcal{R}$, which arises naturally in the light of the general results by JLR in \cite[Thm. 8]{rdf78}. For such a basis several questions remain open concerning differentiation and the associated maximal function, see Subsection \ref{subsec:DT}. Finally, we present a negative result of differentiation on $\mathbb{T}^\omega$ relative to
the so called \textit{extended Rubio de Francia basis} $\mathcal{R^\ast}$, see Subsection \ref{subsec:neg}. 

\section*{Acknowledgments}

The original idea of the Calder\'on--Zygmund (CZ) decomposition presented in Section \ref{sec:condi} was sketched in a personal communication of JLR to the first author in 1977, in Madrid. 

The authors would like to thank the referees for their very careful reading and
useful comments which indeed improved the presentation of the paper.


\section{A CZ decomposition  in $\mathbb{T}^\omega$}
\label{sec:condi}
For completeness we will 
recall some concepts from Probability Theory used later in this section (see for instance \cite[p. 89--94]{steintopics}, \cite[Ch. 5]{edwardsgau}, \cite[2.7]{shiry}).

\begin{defi}
Let $(X,\mathcal{A},\mu)$ be a finite measure space, and $\mathcal{B}$ be a $\sigma$-algebra contained in $\mathcal{A}$. 
The \textit{conditional expectation of $f$ given $\mathcal{B}$}
is the (unique $\mu$-a.e.) 
$\mathcal{B}$-measurable function $E^{\mathcal{B}}f$ (the notation is that of \cite{neveu}), such that
\begin{equation}\label{especond}
\int_B f\,d\mu = \int_B (E^{\mathcal{B}}f)\,d\mu  \quad\forall\, B\in\mathcal{B}.
\end{equation}
E.g., suppose that $\{B_n\}_{n=1}^\infty$  is a countable  division of $X$ in $\mathcal{A}$-measurable sets of positive measure, and consider the least $\sigma$-algebra  $\mathcal{B}$ which contains those sets (we write $\mathcal{B}:=\sigma(\{B_n\})$. 
Then,
\begin{equation}
\label{comparfm1}
E^{\mathcal{B}}f(x)=\sum_n f_{B_n}\chi_{B_n}(x)
\end{equation}
(where $f_{B}:=\frac1{\mu(B)}\int_{B} f\,d\mu$ and $\chi_S$ denotes the characteristic function of the set $S$), since
the  function $s(x)$ at the right hand side of \eqref{comparfm1} is $\mathcal{B}$-measurable and 
$\int_{B_n}s\,d\mu=f_{B_n} \mu(B_n)=\int_{B_n}f\,d\mu$ holds.
\end{defi}
\begin{propi}\label{propi2}
If $\mathcal{B}\subset \mathcal{C}$ are sub-$\sigma$-algebras of $\mathcal{A}$, then  $E^{\mathcal{B}}(E^{\mathcal{C}}f)=E^{\mathcal{B}}f$ a.e. 
\end{propi}

\begin{defi} Let $(X,\mathcal{A},m)$ be a finite measure space and let
$$
\mathcal{B}_1\subset \mathcal{B}_2\subset \cdots\subset \mathcal{B}_n\subset \mathcal{B}_{n+1}\subset \cdots
$$ 
be an increasing sequence of sub-$\sigma$-algebras of $\mathcal{A}$.  

A  sequence of functions $\{f_n\}_{n\in\mathbb{N}}\subset L^1(m)$ such that, for each $n\ge1$ the function  $f_n$ is $\mathcal{B}_n$-measurable and $E^{\mathcal{B}_n}f_{n+1}=f_n$ (a.e.), is called a \emph{martingale}.
\end{defi}

For instance, for every $f\in L^p(\mu)$ ($1\le p\le \infty$) the sequence $f_n:=E^{\mathcal{B}_n}f$ ($n\in\mathbb{N}$) is a martingale,
since $E^{\mathcal{B}_n}f_{n+1}=E^{\mathcal{B}_n}(E^{\mathcal{B}_{n+1}}f)=E^{\mathcal{B}_n}f$ a.e., according to Property~\ref{propi2}. Moreover the following holds: 
\begin{teor} \label{maxmartinteo} \emph{(i)} The maximal operator, associated to $\{\mathcal{B}_n\}$,  defined on $L^1(\mu)$ by $E^\ast f(x):=\sup_{n}|f_n(x)|$, where $f_n=E^{\mathcal{B}_n}f$ ($n\in\mathbb{N}$), is weak $(1,1)$ \emph{(Doob's inequality \cite[VII, Thm. 3.2]{doob})}, and strong $(p,p)$, $1<p\le\infty$. 

\emph{(ii)} Furthermore,
$(f_n)$ converges almost everywhere. Actually, 
$$
\lim_{n\to\infty}f_n(x)=(E^\mathcal{B} f)(x) \ \text{$\mu$-a.e.,}
$$
where  $\mathcal{B}=\sigma\bigl(\bigcup_{n=1}^\infty \mathcal{B}_n\bigr)$.
\end{teor}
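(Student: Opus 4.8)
The plan is to obtain (i) by a stopping-time decomposition of the level sets of the maximal function together with Marcinkiewicz interpolation, and then to deduce (ii) by coupling the maximal estimate from (i) with a.e.\ convergence on a dense subclass. For (i) I would start from the pointwise inequality $\abs{f_n}=\abs{E^{\mathcal{B}_n}f}\le E^{\mathcal{B}_n}\abs{f}$ (a consequence of the monotonicity of conditional expectation, itself immediate from \eqref{especond}), which already yields the strong $(\infty,\infty)$ bound $E^\ast f\le\norm{f}_\infty$ a.e.\ (conditional expectation of a constant is that constant). For the weak $(1,1)$ bound, fix $\lambda>0$ and $N\in\mathbb{N}$ and split $\{\max_{1\le n\le N}\abs{f_n}>\lambda\}$ into the disjoint sets $A_n:=\{\abs{f_k}\le\lambda\ \text{for }k<n,\ \abs{f_n}>\lambda\}$, $1\le n\le N$. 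Since $f_1,\dots,f_n$ are $\mathcal{B}_n$-measurable we have $A_n\in\mathcal{B}_n$, so by \eqref{especond} and the inequality above,
\[
\lambda\,\mu(A_n)\le\int_{A_n}\abs{f_n}\,d\mu\le\int_{A_n}E^{\mathcal{B}_n}\abs{f}\,d\mu=\int_{A_n}\abs{f}\,d\mu ;
\]
summing in $n$ and letting $N\to\infty$ (monotone convergence) gives $\lambda\,\mu(E^\ast f>\lambda)\le\norm{f}_1$, which is Doob's inequality \cite[VII, Thm. 3.2]{doob}. Since $E^\ast$ is sublinear, the strong $(p,p)$ bounds for $1<p<\infty$ then follow by Marcinkiewicz interpolation between these two endpoints.

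For (ii), by Property~\ref{propi2} we have $f_n=E^{\mathcal{B}_n}(E^{\mathcal{B}}f)$, so we may replace $f$ by $E^{\mathcal{B}}f$; it then suffices to show $f_n\to f$ a.e.\ when $f$ is $\mathcal{B}$-measurable. I would introduce the oscillation $\Omega f:=\limsup_n f_n-\liminf_n f_n$, which is subadditive in $f$ and satisfies $0\le\Omega f\le 2E^\ast f$. If $g$ is $\mathcal{B}_m$-measurable and integrable for some $m$, the corresponding sequence $E^{\mathcal{B}_n}g$ equals $g$ for all $n\ge m$, so $\Omega g=0$; moreover $\bigcup_m L^1(X,\mathcal{B}_m,\mu)$ is dense in $L^1(X,\mathcal{B},\mu)$ because $\bigcup_m\mathcal{B}_m$ is an algebra generating $\mathcal{B}$ (a routine Dynkin-class argument using $\mu(X)<\infty$). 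Picking such a $g$ with $\norm{f-g}_1$ small and using subadditivity together with the weak $(1,1)$ bound,
\[
\mu(\Omega f>\eta)=\mu\bigl(\Omega(f-g)>\eta\bigr)\le\mu\bigl(E^\ast(f-g)>\eta/2\bigr)\le\frac{2}{\eta}\norm{f-g}_1
\]
for every $\eta>0$; hence $\Omega f=0$ a.e.\ and $f_n$ converges a.e.\ to some $\mathcal{B}$-measurable $h$. To identify $h$, I would note that $\{f_n\}$ is uniformly integrable (from $\int_{\{\abs{f_n}>\lambda\}}\abs{f_n}\le\int_{\{\abs{f_n}>\lambda\}}\abs{f}$, the Chebyshev bound $\mu(\abs{f_n}>\lambda)\le\norm{f}_1/\lambda$, and absolute continuity of the integral), so in fact $f_n\to h$ in $L^1$; passing to the limit in $\int_B f_n\,d\mu=\int_B f\,d\mu$ (valid whenever $B\in\mathcal{B}_m$ and $n\ge m$) gives $\int_B h\,d\mu=\int_B f\,d\mu$ for all $B\in\bigcup_m\mathcal{B}_m$, hence for all $B\in\mathcal{B}$, and therefore $h=f$ a.e.

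The step I expect to be the main obstacle is the passage, in (ii), from the norm control supplied by (i) to genuine pointwise convergence: this requires both the reduction to a dense subclass on which the martingale is eventually constant and, separately, the uniform integrability of $\{f_n\}$ in order to pin the a.e.\ limit down as $E^{\mathcal{B}}f$. Part (i) itself, together with its $L^\infty$ endpoint and the interpolation, is comparatively routine.
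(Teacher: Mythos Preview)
Your argument is correct and is the standard stopping-time proof of Doob's maximal inequality together with the usual density-plus-maximal-inequality route to a.e.\ convergence; the uniform integrability step you use to identify the limit is also fine (indeed $\{|f_n|>\lambda\}\in\mathcal{B}_n$, so $\int_{\{|f_n|>\lambda\}}|f_n|\le\int_{\{|f_n|>\lambda\}}|f|$ as you wrote).

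There is nothing to compare against, however: the paper does \emph{not} prove Theorem~\ref{maxmartinteo}. It is stated as background, with the weak $(1,1)$ inequality attributed to Doob \cite[VII, Thm.~3.2]{doob} and the surrounding material referred to \cite[p.~89--94]{steintopics}, \cite[Ch.~5]{edwardsgau}, \cite[2.7]{shiry}. The theorem is then used as a black box in the proof of Theorem~\ref{descompCZ}. So your write-up supplies a proof where the paper simply cites one; the approach you give is essentially the one found in those references.
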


\subsection{JLR on decomposition of CZ type in locally compact groups}

Let $G$ be a locally compact group 
with identity $e$ and Haar measure (left invariant) $m$, and $H$ be a discrete subgroup of $G$. We will first give  a definition and a lemma.
\begin{defi} (\cite[Section 1]{rdf78}.) \label{funddom} An open subset $V$ of $G$ is called a \emph{fundamental domain} (FD) for the quotient group $G/H$ if these two conditions hold:
\begin{enumerate}
\item $VV^{-1}\cap H=\{e\}$ (or what is the same, the restriction $\pi|_V$ of the canonical projection $\pi\colon G\to G/H$ is $1-1$).
\item 
The complement of $VH$ in $G$ is a locally null set.\footnote{Cf. \cite[(20.11) Definition]{hewitstrom}. }
\end{enumerate}
\end{defi}
For example,  the open interval $(0,1)$ is a FD for $\mathbb{R}/\mathbb{Z}$.  For each $n\in\mathbb{N}$, the interval  $\bigl(0,\frac1n\bigr)$ is a FD for $\mathbb{T}/R_n$, where $R_n:=\bigl\{0,\frac1n,\ldots,\frac{n-1}n\bigr\}$  is the subgroup of the $n$th roots of unity in $\mathbb{T}$. 

\begin{lema} (\cite[Lemma 2]{rdf78}.)
Assume that $G$ contains a sequence of discrete subgroups
$$
H_1\subset H_2\subset \cdots\subset H_n\subset \cdots\subset G
$$
such that each $G/H_n$ is compact, and write $k_n:=\orden(H_{n+1}/ H_n)$.
Then, there is a sequence of open sets
$$
V_1\supset V_2\supset \cdots\supset V_n\supset \cdots\text{,}
$$
such that $V_n$ is a FD for $G/H_n$, and each $V_n$ is,
except for a null set, the disjoint union of $k_n$ translates of $V_{n+1}$ by elements of $H_{n+1}$. 
\end{lema}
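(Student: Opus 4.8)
The plan is to construct the sets $V_n$ recursively, starting from a fundamental domain for $G/H_1$ and then repeatedly subdividing. First I would fix a fundamental domain $V_1$ for $G/H_1$ whose existence is the content of the $n=1$ instance of the claim; such a domain can be obtained by a standard argument (one picks a Borel cross-section of $\pi_1\colon G\to G/H_1$ and passes to an open set differing from it by a null set, using local compactness and the Weil formula to control measures). Actually, in the recursion it is cleaner to build $V_n$ downward: assuming $V_n$ is an FD for $G/H_n$, I want to exhibit inside $V_n$ an open set $V_{n+1}$ that is an FD for $G/H_{n+1}$ and such that $V_n$ is, up to a null set, the disjoint union $\bigsqcup_{h\in F} hV_{n+1}$ over a set $F\subset H_{n+1}$ of $k_n=\orden(H_{n+1}/H_n)$ representatives.

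The key step is the passage from $V_n$ to $V_{n+1}$. Since $H_n\subset H_{n+1}$ and $k_n=[H_{n+1}:H_n]<\infty$, choose coset representatives $h_1=e,h_2,\dots,h_{k_n}$ for $H_{n+1}/H_n$, so $H_{n+1}=\bigsqcup_{i} h_i H_n$. The set $V_n H_n$ is, by condition (2) of Definition~\ref{funddom}, co-null in $G$; hence so is $V_n H_{n+1}=\bigcup_i V_n h_i H_n$, but now the pieces overlap. The idea is to disjointify: choose inside $V_n$ an open subset $W$ with $W h_i^{-1} V_n \cap W=\emptyset$ for $i\neq 1$ in the appropriate sense, or more directly, note that $\pi_{n+1}|_{V_n}$ is at most $k_n$-to-one (because two points of $V_n$ differing by an element of $H_{n+1}$ must differ by an element of $V_n V_n^{-1}\cap H_{n+1}$, which meets each $H_n$-coset in at most one point since $V_n V_n^{-1}\cap H_n=\{e\}$). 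So $V_n$ decomposes measure-theoretically into at most $k_n$ "sheets" over $\pi_{n+1}(V_n)$; one selects an open sheet $V_{n+1}$ realizing $\pi_{n+1}(V_n)$ up to a null set, checks $V_{n+1}V_{n+1}^{-1}\cap H_{n+1}=\{e\}$ (inherited from $V_n$), checks $V_{n+1}H_{n+1}\supset V_n$ up to null sets so condition (2) holds, and checks that the $k_n$ translates $h_i V_{n+1}$ reassemble $V_n$ up to null sets. Finally $V_{n+1}\subset V_n$ holds by construction.

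The main obstacle is the simultaneous \emph{openness} of $V_{n+1}$ and the exact disjointness of the translates: a naive measure-theoretic cross-section is only Borel, and trimming it to an open set must be done without destroying either the covering property (2) or the condition $V_{n+1}V_{n+1}^{-1}\cap H_{n+1}=\{e\}$; one must use that $H_{n+1}$ is discrete (so there is a neighbourhood of $e$ meeting $H_{n+1}$ only in $e$) together with regularity of Haar measure to absorb the boundary into the permitted null set. Once this single inductive step is in place, the whole chain $V_1\supset V_2\supset\cdots$ follows immediately by induction, and the nesting, the fundamental-domain property at each level, and the $k_n$-fold subdivision are exactly the three assertions of the lemma. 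I would also remark that in the concrete case $G=\mathbb{T}^\omega$, $H_n=R_{p_1}\times\cdots\times R_{p_n}\times\{\bar 0\}\times\cdots$ for a chosen sequence of integers, the sets $V_n=\bigl(0,\tfrac1{p_1}\bigr)\times\cdots\times\bigl(0,\tfrac1{p_n}\bigr)\times\mathbb{T}^{n,\omega}$ can be written down explicitly and satisfy all the requirements with $k_n=p_{n+1}$, which both illustrates the lemma and is what is actually used later.
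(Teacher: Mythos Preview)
The paper does not contain a proof of this lemma: it is merely quoted, with the citation ``\cite[Lemma 2]{rdf78}'', and the argument is left entirely to Rubio de Francia's original paper. So there is no ``paper's own proof'' to compare your attempt against.

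As for your sketch on its own merits: the inductive strategy (build $V_1$, then carve $V_{n+1}$ out of $V_n$) is the natural one, and your observation that $\pi_{n+1}\vert_{V_n}$ is a.e.\ $k_n$-to-one is correct and is the right way to see why the subdivision should work. You are also honest about the real difficulty, namely arranging that the Borel section can be replaced by an \emph{open} set while keeping both conditions of Definition~\ref{funddom}; but you do not actually carry this out, and the phrase ``use that $H_{n+1}$ is discrete together with regularity of Haar measure to absorb the boundary into the permitted null set'' is a description of what one hopes to do rather than an argument. A clean way to close this gap is to go in the opposite direction at each step: first take any open FD $U$ for $G/H_{n+1}$, note that for coset representatives $h_1=e,\dots,h_{k_n}$ of $H_{n+1}/H_n$ the translates $h_iU$ are pairwise disjoint and their union is an open FD for $G/H_n$; then intersect with the previously constructed $V_n$ (or, more carefully, replace $V_n$ a.e.\ by this union, which one can do since two FDs for the same quotient differ by a null set). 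This makes the openness and the disjoint $k_n$-fold splitting automatic. Your closing explicit example on $\mathbb{T}^\omega$ is correct and is exactly the instance the paper goes on to use in Subsection~\ref{subsec:CZdes}.
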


The main result of JLR on decomposition of CZ type in this context, is the following:

\begin{teor} \emph{(\cite[Thm. 8, into the first part of the proof]{rdf78}.)} \label{teor8jlr78}
Assume additionally that $\cup_n H_n$ is dense in $G$ and that
\begin{equation}
\label{jlr78cond1}
\sup_n \,k_n=k<\infty.
\end{equation}
Then, for each $f\in L^1(G)$ and $a>\|f\|_1$ there is a disjoint sequence of open sets $S_j$ belonging to the family $\{tV_n\colon t\in H_n\text{,}\  n\in\mathbb{N}\}$,  such that $|f(x)|\le a$ a.e. outside $A=\cup_j S_j$,  $m(A)\le C\|f\|_1/a$ for a constant $C$ independent from $f$ and $a$, and $a\le |f|_{S_j}\le ka$ $(j=1,2,\ldots)$.
\end{teor}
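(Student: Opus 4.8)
The plan is to run a stopping-time selection on the ``dyadic'' tree of fundamental domains furnished by the preceding lemma. First I would fix $f\in L^1(G)$, set $g:=|f|$, and for each $n$ let $\mathcal D_n:=\{tV_n:t\in H_n\}$. By the lemma (and condition~(1) in the definition of FD, together with $G/H_n$ compact) the members of $\mathcal D_n$ are pairwise disjoint open sets covering $G$ up to a null set, and each $Q\in\mathcal D_n$ is, modulo a null set, the disjoint union of exactly $k_n$ members of $\mathcal D_{n+1}$; thus $\bigcup_n\mathcal D_n$ is a tree under inclusion, rooted at $\mathcal D_1$. The $\sigma$-algebras $\mathcal B_n:=\sigma(\mathcal D_n)$ then increase, and by \eqref{comparfm1} one has $E^{\mathcal B_n}h=\sum_{Q\in\mathcal D_n}h_Q\chi_Q$ with $h_Q:=\frac1{m(Q)}\int_Q h\,dm$; write $g_n:=E^{\mathcal B_n}g$.

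Next I would select: call $Q$ (at level $n$) \emph{selected} if $g_Q>a$ while $g_{Q'}\le a$ for every atom $Q'\supsetneq Q$, i.e.\ let $\{S_j\}$ enumerate the maximal elements of $\{Q\in\bigcup_n\mathcal D_n:g_Q>a\}$, and put $A:=\bigcup_j S_j$. As maximal elements of a tree the $S_j$ are pairwise disjoint open sets, and $|f|_{S_j}=g_{S_j}>a$, which is the lower estimate. (It is harmless, and in the $\mathbb T^\omega$ application automatic, to take $H_1=\{e\}$, so that $V_1=G$ and $g_{V_1}=\|f\|_1<a$; then no root is selected and every $S_j$ has a parent $\widehat{S_j}$ one level up.) For the measure bound, disjointness and $g_{S_j}>a$ give
\[
a\,m(A)=a\sum_j m(S_j)<\sum_j\int_{S_j}g\le\|g\|_1=\|f\|_1 ,
\]
so $m(A)\le\|f\|_1/a$, even with $C=1$ (had I instead worked with the set $\{\sup_n g_n>a\}$, a union of overlapping atoms, the same bound with a worse constant would follow from Doob's inequality, Theorem~\ref{maxmartinteo}(i)).

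For the upper estimate — the only point where \eqref{jlr78cond1} enters — I would pass to the parent: if $S_j\in\mathcal D_n$ with parent $\widehat{S_j}\in\mathcal D_{n-1}$, then $g_{\widehat{S_j}}\le a$ by the stopping rule, and $\widehat{S_j}$ splits (mod a null set) into $k_{n-1}\le k$ members of $\mathcal D_n$, whence $m(\widehat{S_j})=k_{n-1}m(S_j)$ and
\[
|f|_{S_j}=\frac1{m(S_j)}\int_{S_j}g\le\frac1{m(S_j)}\int_{\widehat{S_j}}g=k_{n-1}\,g_{\widehat{S_j}}\le ka .
\]

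Finally, to get $|f(x)|\le a$ for a.e.\ $x\notin A$: for such $x$ every atom containing it has average $\le a$, hence $g_n(x)\le a$ for all $n$; by Theorem~\ref{maxmartinteo}(ii), $g_n\to E^{\mathcal B}g$ a.e.\ with $\mathcal B=\sigma(\bigcup_n\mathcal B_n)$, and — this is the step I expect to be delicate — the density of $\bigcup_n H_n$ in $G$ forces $\delta(V_n)\to0$, so that $\mathcal B$ is, modulo null sets, the whole Borel $\sigma$-algebra (which is generated by the balls, as recalled above); therefore $E^{\mathcal B}g=g$ a.e.\ and $|f(x)|=g(x)=\lim_n g_n(x)\le a$ a.e.\ off $A$. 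Everything else is bookkeeping with the tree of fundamental domains and with the disjointness of the selected atoms.
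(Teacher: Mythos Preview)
Your proposal is correct and follows essentially the same approach as the paper's own argument (given there for the special case $G=\mathbb T^\omega$, Theorem~\ref{descompCZ}): define the conditional expectations $g_n=E^{\mathcal B_n}|f|$ with $\mathcal B_n=\sigma(\mathcal D_n)$, perform the stopping-time selection of the maximal atoms with average $>a$ (the paper writes this as the decomposition $\Omega_a=\bigcup_n\Omega_a^{(n)}$ with $\Omega_a^{(n)}=\{f_i\le a<f_n,\ 0\le i\le n-1\}$, which is the same thing), use the parent atom and the ratio $m(\widehat{S_j})/m(S_j)=k_{n-1}\le k$ for the upper bound, and invoke martingale convergence (Theorem~\ref{maxmartinteo}(ii)) together with density of $\bigcup_n H_n$ to identify $E^{\mathcal B}$ with the identity and obtain $|f|\le a$ a.e.\ off $A$. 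The only cosmetic difference is that the paper first quotes Doob's weak $(1,1)$ inequality for $m(\Omega_a)$ and afterwards observes that the direct disjointness argument gives the constant $C=1$, whereas you do the latter first; and for the ``delicate'' step you flag, the paper argues that density of $\bigcup_m H_m$ makes $\bigcup_m\mathcal B_m$ coincide with the class of open sets, so $\mathcal B$ is the Borel $\sigma$-algebra --- the same conclusion you reach.
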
 


\subsection{A decomposition of CZ type in $\mathbb{T}^\omega$}
\label{subsec:CZdes}

\quad

\smallskip
Our subsequent Theorem \ref{descompCZ} will show the original proof of JLR Theorem \ref{teor8jlr78} in case of the group (compact, abelian) $G=\mathbb{T}^\omega$. Before its statement  we must establish a suitable sequence of subgroups (that JLR did actually teach us in the aforementioned personal communication), which we present below. The decomposition of CZ type in $\mathbb{T}^\omega$ will turn out to be associated with a certain family $\mathcal{R}_0$ (see \eqref{redtraslad}) of ``dyadic  intervals".

\begin{defis} (\cite[VII.43]{munroe}.) A \emph{net}\footnote{Do not confuse it with net in the sense \cite[Ch. 2]{kelley} of directed set or generalized sequence. The concept of  sequence of nets, originally in the euclidean space, 
is due to de la Vall\'{e}e Poussin \cite[10.67]{poussin15}. Saks \cite[p. 153]{saks} generalizes it to measure metric spaces.  See also \cite[\S 6]{jessen}.} 
in $\mathbb{T}^\omega$ is a countable class of disjoint measurable sets whose union is $\mathbb{T}^\omega$ except for a set of null measure.  Let $\{\mathcal{M}_n\}_{n\in\mathbb{N}}$ be a sequence of nets. 
The sequence is called \emph{monotonic} if for each positive integer $n$, every set of $\mathcal{M}_{n+1}$ is a subset of some set of $\mathcal{M}_n$. In this case, for almost all $x$ there exists, for each $n\in\mathbb{N}$, an unique set
$I_x^{(n)}\in\mathcal{M}_n$ such that $x\in I_x^{(n)}$.  
\end{defis}

Remember that $R_k:=\{0,\frac1k,\ldots,\frac{k-1}k\}$, $k\in \mathbb{N}$.  In the following table we see the first terms of the increasing sequence of subgroups $H_m\subset\mathbb{T}^\omega$ proposed by JLR, as well as some of the first terms of the associated decreasing sequence of FD:
$$
\begin{array}{r|l|l}
m&H_m&V_m\\\hline
1^2&H_1=R_2\times\{\overline{0}^{(1}\}&V_1=(0,\frac 12)\times \mathbb{T}^{1,\omega}\\
1^2+1&H_2=R_2\times R_2\times\{\overline{0}^{(2}\}&V_2=(0,\frac 12)^2\times \mathbb{T}^{2,\omega}\\
&H_3=R_4\times R_2\times\{\overline{0}^{(2}\}&\\
2^2&H_4=R_4\times R_4\times\{\overline{0}^{(2}\}&V_4=(0,\frac 14)^2\times \mathbb{T}^{2,\omega}\\
&H_5=R_4\times R_4\times R_2\times\{\overline{0}^{(3}\}&\\
2^2+2&H_6=R_4\times R_4\times R_4\times\{\overline{0}^{(3}\}&V_6=(0,\frac 14)^3\times \mathbb{T}^{3,\omega}\\
&H_7=R_8\times R_4\times R_4\times\{\overline{0}^{(3}\}&\\
&H_8=R_8\times R_8\times R_4\times\{\overline{0}^{(3}\}&V_{8}=(0,\frac 1{8})^2\times(0,\frac14)\times \mathbb{T}^{3,\omega}\\
3^2&H_9=R_8\times R_8\times R_8\times\{\overline{0}^{(3}\}&V_9=(0,\frac 18)^3\times \mathbb{T}^{3,\omega}\\
&H_{10}=R_8\times R_8\times R_8\times R_2\times\{\overline{0}^{(4}\}&\\
\puntillos&\puntillos&\puntillos
\end{array}
$$

Actually, after $H_1=R_2\times\{\bar{0}^{(1}\}$ we define, for each $n\ge 1$ 
\begin{gather*}
H_{n^2+j}=\widetilde{H}_{n^2+j}\times\{\bar{0}^{(n+1}\},\quad (1\le j\le 2n+1),\\
\intertext{and}
\widetilde{H}_{n^2+j}
:=\begin{cases}
R_{2^n}\times\overset{(n)}{\cdots}\times R_{2^n}\times R_{2^j} \qquad \,\qquad\qquad \qquad \text{if $j\in\{1,\ldots,n\}$},\\[4pt]
R_{2^{n+1}}\times\overset{(j-n)}{\cdots}\times R_{2^{n+1}}\times  R_{2^n}\times\overset{(2n+1-j)}{\cdots}\times R_{2^n}\\[4pt]
\qquad \qquad \qquad \qquad \qquad \qquad \qquad \qquad \qquad  \text{if $j\in\{n+1,\ldots,2n+1\}$}.
\end{cases}
\end{gather*}

For each positive integer $m$, $H_m$ is a discrete (finite, of order $2^m$) subgroup  of $\mathbb{T}^\omega$, $H_m\subset H_{m+1}$, and 
$\orden(H_{m+1}/H_m)=2$ for all $m\ge 1$. Moreover, each $\mathbb{T}^\omega/H_m$ is compact, because $\mathbb{T}^\omega$ is, and  the union $\bigcup_m H_m$ is a dense subset of $\mathbb{T}^\omega$ as is easily checked.

\begin{figure}[ht]
\begin{center}
\quad\hspace*{-30pt}\includegraphics[scale=.45]{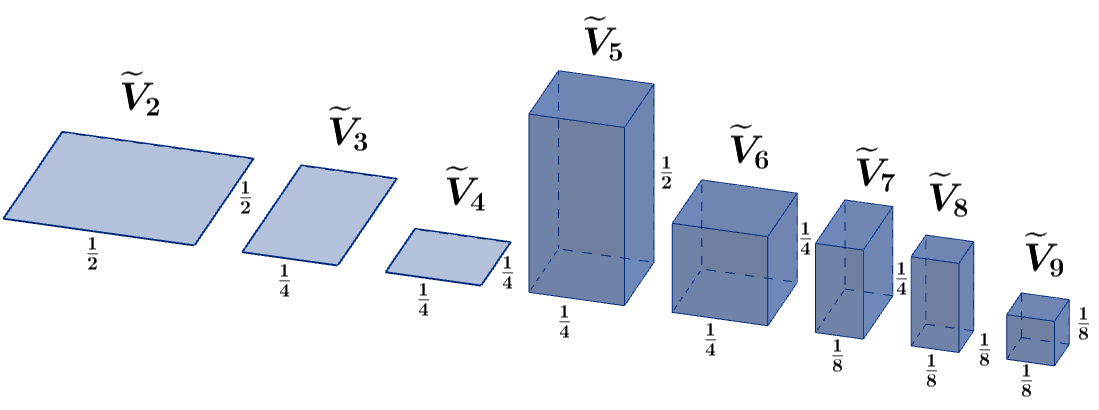}
\caption{First members of the sequence $\{\widetilde{V}_m\}$. E.g., $\widetilde{V}_7=(0,\frac 18)\times(0,\frac 14)^2$.
Two translations of $V_{m+1}$ by elements of $H_{m+1}$ cover (a.e.) $V_m$.}\label{CZ_fig1}
\end{center}
\end{figure}
The  associated decreasing sequence of open sets $\{V_m\}$, where for each $m\ge 1$, $V_m$ is a FD for $\mathbb{T}^\omega/H_m$, is defined by $V_1=(0,\tfrac12)\times \mathbb{T}^{1,\omega}$
and, for each $n\ge 1$,
\begin{equation}
\label{uvesubn}
V_{n^2+j}=\widetilde{V}_{n^2+j}\times\mathbb{T}^{n+1,\omega}\quad (1\le j\le 2n+1),
\end{equation}
with
$$
\widetilde{V}_{n^2+j}
:=\begin{cases}
(0,\tfrac1{2^{n}})^n\times (0,\tfrac1{2^{j}})&\text{if $j\in\{1,\ldots,n\}$},\\[4pt]
(0,\tfrac1{2^{n+1}})^{j-n}\times (0,\tfrac1{2^{n}})^{2n+1-j} &\text{if $j\in\{n+1,\ldots,2n+1\}$}
\end{cases}
$$
(see Figure \ref{CZ_fig1}).

We can consider the (finite) net $\mathcal{N}_m:=\{t+V_m\colon t\in H_m\}$ for each $m\in\mathbb{N}$.   
The sequence $\{\mathcal{N}_m\}_{m\in\mathbb{N}}$ is monotonic. Our final family $\mathcal{R}_0$ of dyadic intervals in $\mathbb{T}^\omega$ is the union of this monotonic sequence of nets, 
\begin{equation}\label{redtraslad}
\mathcal{R}_0:=\bigcup_m \mathcal{N}_m=\left\{t+V_m\colon  m\in\mathbb{N}, t\in H_m \right\}.
\end{equation}

The following announced result holds. The collection $\{I_j\}$ will be a \emph{Calder\'on-Zygmund decomposition} of intervals of  $\mathcal{R}_0$ 
associated with the function $f$ at level~$a$.

\begin{teor}[Jos\'{e} L. Rubio de Francia] \label{descompCZ} 
Let $f\in L^1(\mathbb{T}^\omega)$ and $a> \|f\|_1$. There exists a closed set $F_a$ and an open set $\Omega_a=\mathbb{T}^\omega\setminus F_a$ such that
\begin{enumerate}
\item[(i)] 
$|f(x)|\le a$ for almost all $x\in F_a$.
\item[(ii)] 
The set $\Omega_a$ is the union of a sequence $\{I_j\}_{j=1}^\infty$ of pairwise disjoint intervals 
of the family $\mathcal{R}_0$ 
such that $a\le |f|_{I_j}\le 2a$ for all $I_j$.
\item[(iii)] It is verified $m(\Omega_a)\le \frac{\|f\|_1}a$.
\end{enumerate}
\end{teor}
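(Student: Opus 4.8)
The plan is to mimic the classical Calder\'on--Zygmund stopping-time argument, but using the monotonic sequence of nets $\{\mathcal{N}_m\}$ in place of dyadic cubes. First I would set $f_m := E^{\mathcal{B}_m}f$, where $\mathcal{B}_m := \sigma(\mathcal{N}_m)$ is the (finite) $\sigma$-algebra generated by the net $\mathcal{N}_m$; by \eqref{comparfm1} this martingale is explicitly $f_m(x) = \sum_{I\in\mathcal{N}_m} f_I\,\chi_I(x)$, where $f_I = \frac{1}{m(I)}\int_I f\,dm$. Because $\bigcup_m H_m$ is dense in $\mathbb{T}^\omega$ and $\delta(V_m)\to 0$, the net $\sigma$-algebras increase to (the completion of) $\mathcal{B}$, so by Theorem \ref{maxmartinteo}(ii), $f_m(x)\to f(x)$ for a.e.\ $x$.

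Next comes the stopping-time selection. For each $x$ for which the limit holds, since $a > \|f\|_1 = \|f_0\|_1 \geq |f_{\mathbb{T}^\omega}| $ — more precisely $|f_I|\le a$ for the top net $\mathcal{N}_1$ provided $a$ is large enough, but in general one starts from $\mathcal{N}_1$ where each average is $\le 2\|f\|_1 < 2a$; the cleaner route is to note $|f|_{\mathbb{T}^\omega}\le\|f\|_1<a$ and stop the first time an average exceeds $a$. So I select $\{I_j\}$ to be the maximal intervals $I\in\mathcal{R}_0$ (with respect to inclusion, equivalently the first in the chain $I_x^{(1)}\supset I_x^{(2)}\supset\cdots$) such that $|f|_I \ge a$. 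These are pairwise disjoint by maximality. Set $\Omega_a := \bigcup_j I_j$ and $F_a := \mathbb{T}^\omega\setminus\Omega_a$; $\Omega_a$ is open (each $t+V_m$ is open) and $F_a$ is closed, giving (ii)'s structural claim. For the two-sided bound on each $I_j$: the lower bound $a\le |f|_{I_j}$ is the stopping condition; for the upper bound, let $\widehat{I}_j$ be the (unique) interval of the previous net containing $I_j$ — that is, $\widehat{I}_j = I_x^{(m-1)}$ if $I_j = I_x^{(m)}$. By maximality $|f|_{\widehat{I}_j} < a$, and since $\orden(H_{m}/H_{m-1}) = 2$, the Lemma (\cite[Lemma 2]{rdf78}) tells us $\widehat{I}_j$ is, up to a null set, the disjoint union of exactly two translates of net-$m$ intervals, so $m(\widehat{I}_j) = 2\,m(I_j)$; hence
\[
|f|_{I_j} = \frac{1}{m(I_j)}\int_{I_j}|f| \le \frac{1}{m(I_j)}\int_{\widehat{I}_j}|f| = 2\cdot\frac{1}{m(\widehat{I}_j)}\int_{\widehat{I}_j}|f| = 2\,|f|_{\widehat{I}_j} < 2a,
\]
which is (ii). For (iii), summing the lower bounds gives $a\sum_j m(I_j)\le \sum_j\int_{I_j}|f|\le \|f\|_1$, i.e.\ $m(\Omega_a)\le \|f\|_1/a$.

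Finally (i): if $x\in F_a$, then $x$ lies in no selected interval, which means $|f|_{I_x^{(m)}} < a$ for \emph{every} $m$, i.e.\ $|f_m(x)| \le |f|_{I_x^{(m)}} < a$ for all $m$; letting $m\to\infty$ along the a.e.\ convergence $f_m(x)\to f(x)$ yields $|f(x)|\le a$ for a.e.\ $x\in F_a$.

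The main obstacle I anticipate is not the stopping-time argument itself but checking carefully that the specific net sequence $\{\mathcal{N}_m\}$ built from JLR's subgroups really does satisfy the two hypotheses the argument consumes: (a) that $\bigcup_m H_m$ is dense and $\delta(V_m)\to 0$, so that $\sigma(\bigcup_m\mathcal{B}_m)$ is the full Borel $\sigma$-algebra and the martingale convergence theorem delivers $f_m\to f$ a.e.\ — this requires knowing the metric \eqref{metrics} is compatible with the product topology and that the $V_m$, which only constrain finitely many coordinates at each stage, nonetheless have diameters shrinking to $0$ (true because the coordinate-widths are $\le 2^{-\lfloor\sqrt m\rfloor}$ and the tail of $\sum 2^{-n}$ is small); and (b) that $V_{m-1}$ is a.e.\ the union of exactly $k_{m-1}=2$ translates of $V_m$ by elements of $H_m$, which is exactly the content of the Lemma given that $\orden(H_m/H_{m-1})=2$ for every $m$, but one should verify this order computation directly from the explicit description of $\widetilde H_{n^2+j}$. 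Once these structural facts are in hand, the proof is the routine CZ dyadic argument transported verbatim.
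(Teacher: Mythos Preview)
Your argument is correct and is essentially the same stopping-time proof the paper gives. Two cosmetic differences: the paper works throughout with the martingale $f_m=E^{\mathcal{B}_m}|f|$ rather than $E^{\mathcal{B}_m}f$ (which spares the detour through $|f_m(x)|\le |f|_{I_x^{(m)}}$ in your part (i)), and for (iii) the paper first quotes Doob's weak $(1,1)$ inequality to get $m(\Omega_a)\le A\|f\|_1/a$ and only afterwards notes that the lower bound in (ii) forces $A=1$, whereas you obtain the sharp constant directly by summing $a\,m(I_j)\le\int_{I_j}|f|$. The structural checks you flag as obstacles (density of $\bigcup_m H_m$, $\delta(V_m)\to 0$ so that $\sigma(\bigcup_m\mathcal{B}_m)$ is the Borel $\sigma$-algebra, and $\orden(H_m/H_{m-1})=2$) are precisely the facts the paper relies on as well.
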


\begin{proof}
(See also \cite[Thm. 2.10 and 2.11]{duoandiko}.)

For each $m\in\mathbb{N}$, let $\mathcal{B}_m:=\sigma(\mathcal{N}_m)$. 
Consider as well the trivial $\sigma$-algebra $\mathcal{B}_0=\{\emptyset,\mathbb{T}^\omega\}$ (which is generated by the open set $V_0:=\mathbb{T}^\omega=\bar{0}+\mathbb{T}^\omega$ and thus has the above general form if we also consider the trivial subgroup $H_0=\{\bar{0}\}$). We have
$$
\mathcal{B}_0\subset\mathcal{B}_1 \subset \mathcal{B}_2\subset \cdots \mathcal{B}_m\subset \mathcal{B}_{m+1}\subset \cdots.
$$
Define, for $m=0,1,2,\ldots$
\begin{equation}\label{comparfm2}
f_m(x):=\sum_{t\in H_m}|f|_{t+V_m}\cdot\chi_{t+V_m}(x).  
\end{equation}
The function $f_m$ is $\mathcal{B}_m$-measurable for each $m$, since it is constant on each interval  $t+V_m$ component of the net $\mathcal{N}_m$. Moreover, $f_m\ge 0$ and
$$
\int_{\mathbb{T}^\omega}f_m=\sum_{t\in H_m}|f|_{t+V_m}\cdot m(V_m)=\sum_{t\in H_m}\int_{t+V_m}|f|=\int_{\mathbb{T}^\omega}|f|=\|f\|_1<\infty
$$
by hypothesis (we have used that $\mathbb{T}^\omega\setminus(H_m+V_m)$ is a null set, since  $V_m$ is a FD for $\mathbb{T}^\omega/H_m$ for each $m$). If we compare \eqref{comparfm2} with \eqref{comparfm1} we will see that $f_m$ is the conditional expectation of $|f|$ related to the $\sigma$-algebra $\mathcal{B}_m$. 

By applying Theorem \ref{maxmartinteo}(ii) we deduce that $f_m(x)\to (E^{\mathcal{B}}|f|)(x)$ a.e. as $m\to\infty$, $\mathcal{B}$ being the least $\sigma$-algebra containing the union $\bigcup_{m=0}^\infty \mathcal{B}_m$. In our case, the set  
$\bigcup_m H_m$ is dense in  $\mathbb{T}^\omega$, and thus the class $\bigcup_{m=0}^\infty \mathcal{B}_m$ is the class of the open sets in $\mathbb{T}^\omega$. Then $\mathcal{B}$ is the $\sigma$-algebra of Borel sets in $\mathbb{T}^\omega$. Therefore the operator $E^{\mathcal{B}}$ of conditional expectation with respect to the $\sigma$-algebra $\mathcal{B}$ is the identity, and so, for all $g\in L^1(\mathbb{T}^\omega)$ it is verified that $E^{\mathcal{B}}g=g$ (a.e.). We conclude that
\begin{equation}\label{convaemartin}
\lim_{m\to\infty}f_m(x)=\abs{f(x)} \quad\text{a.e. in $\mathbb{T}^\omega$}.
\end{equation}
Let
$$
f^*(x)=\sup_{m\in\mathbb{N}}f_m(x).
$$  
If $x\in F_a:=\{x\colon f^*(x)\le a\}$, we will have $f_m(x)\le a $ for all $m$, and an application of \eqref{convaemartin} yields $\abs{f(x)}\le a$, so part (i) is proven.

The set $\Omega_a=\mathbb{T}^\omega\setminus F_a$ from part (ii) is defined as $\Omega_a =\{x\colon f^*(x)> a\}$, and 
the weak type $(1,1)$ inequality (Theorem \ref{maxmartinteo}(i)) 
for the maximal operator\footnote{According to Definitions \ref{defbasedepossel} below, the operator $E^\ast$ would be denoted as $M^{\mathcal{R}_0}$. } 
$E^\ast\colon f\mapsto f^*$ gives
\begin{equation}\label{jlweak11}
m(\Omega_a)\le \frac Aa\|f\|_1,
\end{equation}
where $A$ is a constant independent of $f$ and $a$.

Finally, we have supposed that  $\|f\|_1 < a$, thus $f_0(x)\le a$ holds for all $x$, and we can recognise the set $\Omega_a$ as the disjoint union of the sets
$$
\Omega_a^{(n)}=\{x\colon f_i(x)\le a<f_n(x),\ 0\le i\le n-1\}, \quad n=1,2,\ldots.
$$
For each $n\ge 1$, the set $\Omega_a^{(n)}$ is obviously $\mathcal{B}_n$-measurable, therefore it is the disjoint union of intervals of the form $t+V_n$ with $t\in H_n$. If $I_j^{(n)}$ is one of these intervals we have, on one hand,
\begin{align}
\frac1{m\bigl(I_j^{(n)}\bigr)}\int_{I_j^{(n)}}|f(x)|\,dx&=
\frac1{m\bigl(I_j^{(n)}\bigr)}\int_{I_j^{(n)}}\bigl(E^{\mathcal{B}_n}|f|\bigr)(x)\,dx\label{czdetail}\\
&=\frac1{m\bigl(I_j^{(n)}\bigr)}\int_{I_j^{(n)}}f_n(x)\,dx\ge a\notag
\end{align}
because $f_n(x)>a$  $\forall x\in I_j^{(n)}\subset \Omega_a^{(k)}$. On the other hand, $I_j^{(n)}$ is contained in an interval of the form $s+ V_{n-1}$ ($s\in H_{n-1}$) which is not contained in $\Omega_a^{(n-1)}$ (we make the agreement that $\Omega_a^{(0)}=\emptyset$), so it is $f_{n-1}(x)\le a$ for all $ x\in s+ V_{n-1}$. By using also that $m\bigl(I_j^{(n)}\bigr)=m(V_n)=\frac12m(V_{n-1})$, we have
\begin{align*}
\frac1{m\bigl(I_j^{(n)}\bigr)}\int_{I_j^{(n)}}|f(x)|\,dx&\le \frac2{m(V_{n-1})}\int_{s+V_{n-1}}\abs{f(x)}\,dx\\ 
&=\frac2{m(V_{n-1})}\int_{s+V_{n-1}}\bigl(E^{\mathcal{B}_{n-1}}|f|\bigr) (x)\,dx\\
&= \frac2{m(V_{n-1})}\int_{s+V_{n-1}}f_{n-1}(x)\,dx\le 2a,
\end{align*} 
which finishes the proof of (ii). Moreover, from \eqref{czdetail} it follows that $A=1$ in \eqref{jlweak11}.
\end{proof}

\medskip
\noindent\textbf{Remarks.} 

\smallskip
(1) It is well known that the standard use of the CZ decomposition of the open set $\Omega_a=\cup_j I_j$ involves \cite[Ch. I, proof of Lemma 2]{CZdecomp}, also in $\mathbb{T}^\omega$, a decomposition of the function $f$, at each level $a$, in the sum of
\begin{equation*}
g(x):=f(x)\chi_{F_a}(x)+\sum_j f_{I_j}\chi_{I_j}(x) \quad\text{and}\quad
b(x):=f(x)-g(x)
\end{equation*}
($f=g+b$, $g$ and $b$ \emph{good} and  \emph{bad} (level $a$)-parts of $f$), verifying properties like the following:
\begin{equation*}
|g(x)|\le 2a \quad\text{(a.e.),} \qquad  \int_{I_j} b(x)\,dx=0\  \text{ and } \ |b|_{I_j}\le 4a \ \text{ for all $j$, etc.,} 
\end{equation*}
(see \cite[5.3.8]{grafakos}).

\smallskip
(2) We have seen that JLR \cite{rdf78} uses Theorem \ref{maxmartinteo} in his proof.
The a.e. convergence part (ii) of this theorem plays the role which is played by the differentiation theorem (DT) in a standard proof of the classic result of this type. But here this is not just a style option, we believe, because DT is not a priori assured. These issues are the main content of the next section.

\section{On differentiation of integrals in $\mathbb{T}^\omega$}
\label{sec:dif}
We will start establishing 
the concepts of differentiation basis and differentiation of integrals
adapted to the infinite torus space, which we will adopt in this section. 
\begin{defis} 
(\cite[Section 6.1]{bruckner}, \cite[Ch. 2]{guzmanyellow}.)
\label{defbasedepossel}
For every $y\in\mathbb{T}^\omega$ let $\mathcal{B}(y)$ be a collection of measurable sets of positive measure that contain (or whose topological closures contain) the point $y$. 

\smallskip
If $\{S_n\}_n\subset \mathcal{B}(y)$ and $\delta(S_n)\to 0$, we say that the sequence $S_n$ ``contracts to" $y$, and write  $S_n\Rightarrow y$.

Suppose that there exists at least a sequence $\{S_n\}\subset \mathcal{B}(y)$  such that $S_n\Rightarrow y$.

Let $\mathcal{B}:=\bigcup_{y\in\mathbb{T}^\omega}\mathcal{B}(y)$, and suppose that  $\mathcal{B}$ covers (a.e.) $\mathbb{T}^\omega$.
We call $(\mathcal{B}, \Rightarrow)$  a \emph{differentiation basis}\footnote{If every $B\in \mathcal{B}$ is an open set and if $x\in B\in\mathcal{B}$ then $B\in\mathcal{B}(x)$, $\mathcal{B}$ is called a \emph{Busemann-Feller basis}.} (DB).

\bigskip
\textsc{Examples} (The names are ours): 
\begin{gather*}
\mathcal{R}_0:=\left\{t+V_m\colon  m\in\mathbb{N}\text{,}\  t\in H_m \right\} \quad \text{(\emph{restricted Rubio de Francia basis}),} 
\\
\mathcal{R}:=\left\{y+V_m\colon  m\in\mathbb{N}\text{,}\  y\in \mathbb{T}^\omega \right\}\quad \text{(\emph{Rubio de Francia basis}),}\\
\mathcal{J}:=\left\{J\subset \mathbb{T}^\omega\colon \text{$J$ is an interval}\right\} \quad \text{(\emph{Jessen basis}), \cite{jessen1950,jessen1952}.}
\end{gather*}


Let $(\mathcal{B},\Rightarrow)$ be a differentiation basis in $\mathbb{T}^\omega$. Given $f\in L^1(\mathbb{T}^\omega)$, we define the \emph{upper and lower derivative} of $\int\!f$ with respect to $\mathcal{B}$ (and the Haar measure $m$) in the point $x\in\mathbb{T}^\omega$ by (without loss of generality we assume here that $f$ is a real function)
\begin{equation*}
\overline{D}\bigl( \textstyle\int\!f,x \bigr)=\displaystyle\sup_{\substack{\{B_n\}\subset\mathcal{B}\\B_n\Rightarrow x }}\bigl\{ \limsup_{n} 
f_{B_n}\bigr\}
\quad\text{and}\quad
\underline{D}\bigl( \textstyle\int\!f,x \bigr)=\displaystyle\inf_{\substack{\{B_n\}\subset\mathcal{B}\\B_n\Rightarrow x }}\bigl\{ \liminf_{n} f_{B_n}\bigr\},
\end{equation*}
respectively. When 
\begin{equation}\label{Bdiferintf}
\textstyle\overline{D}\Bigl(\int\!f,x \Bigr)=\underline{D}\Bigl( \int\!f,x \Bigr)
=f(x)\quad \text{a.e.}
\end{equation}
holds, we write $D\bigl(\int\!f,x \bigr)=f(x)$ and say that the basis $\mathcal{B}$ \emph{differentiates} $\int\!f$ and that the \emph{derivative} of $\int\!f$ is $f$. A necessary condition for \eqref{Bdiferintf} is that 
\begin{equation*}
\lim_{n\in\mathbb{N}} f_{B_n} =f(x)\quad \text{a.e.}
\end{equation*}
holds, for every sequence $\{B_n\}_{n\in\mathbb{N}}\subset  \mathcal{B}$ such that $B_n\Rightarrow x$. 

When \eqref{Bdiferintf} is satisfied for all $f\in L^\infty$ (resp. $f\in L^1(\mathbb{T}^\omega)$), we say that $\mathcal{B}$ \emph{differentiates} $L^\infty(\mathbb{T}^\omega)$ (resp. $L^1(\mathbb{T}^\omega)$). 
Note that $L^\infty(\mathbb{T}^\omega)\subset L^1(\mathbb{T}^\omega)$ and thus, if the basis $\mathcal{B}$ \emph{does not} differentiate $L^\infty(\mathbb{T}^\omega)$, then also does not differentiate  $L^1(\mathbb{T}^\omega)$. 
\end{defis}

Let $\mathcal{B}$ be a DB in $\mathbb{T}^\omega$. Suppose that the \emph{maximal operator} associated with $\mathcal{B}$ given by
$$
M^{\mathcal{B}}f(x)=\sup_{x\in B\in\mathcal{B}}|f|_B, \quad f\in L^1(m),
$$
is well defined
 (i.e., we suppose that for each $f\in L^1(m)$, $M^{\mathcal{B}}f$ is measurable).
The following result (due to de Guzm\'{a}n and Welland) holds, its proof is standard.
\begin {teor} \emph{(\cite[Thm. 1.1(a)]{guzmanywelland}.)} \label{gywell}
If the operator $M^{\mathcal{B}}$ is of weak type $(1,1)$, then the basis  $\mathcal{B}$ does differentiate $L^1(\mathbb{T}^\omega)$.
\end{teor}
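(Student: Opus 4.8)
The plan is to follow the classical Hardy--Littlewood argument adapted to an abstract Busemann--Feller basis, using the weak type $(1,1)$ bound to control the oscillation of the averages $f_B$ as $B\Rightarrow x$. First I would reduce the claim to showing that the \emph{oscillation operator}
$$
\Omega f(x):=\overline{D}\bigl(\textstyle\int\!f,x\bigr)-\underline{D}\bigl(\textstyle\int\!f,x\bigr)
$$
vanishes almost everywhere for every $f\in L^1(\mathbb{T}^\omega)$; together with the (easy) density of continuous functions and the fact that $\Omega f\le 2M^{\mathcal{B}}|f|$ pointwise, this would also pin down the common value of $\overline{D}$ and $\underline{D}$ to be $f$ a.e. Continuity on the compact metrizable space $\mathbb{T}^\omega$ with the metric \eqref{metrics} gives, for continuous $g$, that $g_{B_n}\to g(x)$ whenever $B_n\Rightarrow x$ (since $\delta(B_n)\to 0$ forces every point of $B_n$ close to $x$), hence $\Omega g\equiv 0$.

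Next I would run the standard $3\varepsilon$ splitting: given $f\in L^1$ and $\varepsilon>0$, pick continuous $g$ with $\|f-g\|_1<\varepsilon$ and write $h=f-g$. Since $\Omega$ is subadditive and $\Omega g=0$, we get $\Omega f\le \Omega h\le 2M^{\mathcal{B}}|h|$ a.e., so for any $\lambda>0$
$$
m\bigl(\{x:\Omega f(x)>\lambda\}\bigr)\le m\bigl(\{x: M^{\mathcal{B}}|h|(x)>\lambda/2\}\bigr)\le \frac{C}{\lambda/2}\,\|h\|_1<\frac{2C\varepsilon}{\lambda},
$$
using the assumed weak type $(1,1)$ bound for $M^{\mathcal{B}}$. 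Letting $\varepsilon\to0$ gives $m(\{\Omega f>\lambda\})=0$ for every $\lambda>0$, and then taking a countable sequence $\lambda\downarrow 0$ yields $\Omega f=0$ a.e. Finally, on the a.e.\ set where $\Omega f(x)=0$ the two derivatives coincide; to see the common value equals $f(x)$ one repeats the same splitting for the quantity $\bigl|\,\overline{D}(\int\!f,x)-f(x)\,\bigr|\le \Omega h(x)+|h(x)|\le 2M^{\mathcal{B}}|h|(x)+|h(x)|$ (the continuous part contributing $g(x)$, which cancels against itself), and the weak type bound plus Chebyshev on $|h|$ again force the set where this exceeds $\lambda$ to have measure below $C'\varepsilon/\lambda$, hence null in the limit.

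The one genuinely delicate point, rather than the measure-theoretic routine, is verifying that the hypotheses of a differentiation basis in Definitions \ref{defbasedepossel} are actually in force and are used correctly: one needs that for a.e.\ $x$ there is \emph{some} sequence $\{B_n\}\subset\mathcal{B}(x)$ with $B_n\Rightarrow x$ (so that $\overline{D}$ and $\underline{D}$ are not vacuously $\pm\infty$), and that $\delta(B_n)\to0$ really does imply $\max_{y\in B_n}d(x,y)\to0$ — which holds here because $x$ lies in the closure of $B_n$ and $d$ is a genuine metric, so $d(x,y)\le \delta(\overline{B_n})=\delta(B_n)$ for all $y\in B_n$. This is where the abstract statement silently relies on the metric structure of $\mathbb{T}^\omega$; for a Busemann--Feller basis like $\mathcal{R}_0$ or $\mathcal{R}$ this is immediate since the $V_m$ (hence their translates) are open intervals shrinking in diameter. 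Everything else is the textbook Hardy--Littlewood maximal argument, so I would state it compactly and refer to \cite[Ch. 2]{guzmanyellow} for the details.
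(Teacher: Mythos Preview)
Your proposal is correct and is precisely the ``standard'' argument the paper has in mind: the paper does not actually give a proof of Theorem~\ref{gywell} but simply states that ``its proof is standard'' and cites \cite[Thm.~1.1(a)]{guzmanywelland}. What you have written is exactly that classical Hardy--Littlewood density-plus-maximal argument (approximate by $C(\mathbb{T}^\omega)$, bound the oscillation by the maximal function of the remainder, apply the weak $(1,1)$ inequality), so there is nothing to compare.
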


\subsection{Differentiation Theorem on locally compact groups. The basis $\mathcal{R}$ in $\mathbb{T}^\omega$}
\label{subsec:DT}

\begin{teor} \emph{(\cite[Thm. 8]{rdf78}.)} \label{teor8jlr78ii} With the hypothesis of \emph{Theorem \ref{teor8jlr78}}, let $\mathcal{R}$ be the family formed by all sets of the form $yV_n$ with $y\in G$, $n=1,2,\ldots$.  If 
\begin{equation}\label{jlr78cond2}
\sup_n \,\frac{m(V_nV_n^{-1}V_n)}{m(V_n)}<\infty
\end{equation} 
holds, then $M^{\mathcal{R}}$ is weak type $(1,1)$ and strong $(p,p)$ for $1<p\le \infty$.
\end{teor}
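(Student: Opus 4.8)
The plan is to deduce the weak $(1,1)$ bound for $M^{\mathcal R}$ from the weak $(1,1)$ bound for the martingale maximal operator $E^{\ast}=M^{\mathcal R_0}$ (Theorem \ref{maxmartinteo}(i), with the refinement $A=1$ coming from the proof of Theorem \ref{descompCZ}), using condition \eqref{jlr78cond2} to pass from the ``restricted'' translates $t+V_n$ ($t\in H_n$) to the full family of translates $y+V_n$ ($y\in G$). The strong $(p,p)$ bounds for $1<p<\infty$ then follow by interpolating with the trivial $L^{\infty}$ bound $\|M^{\mathcal R}f\|_{\infty}\le\|f\|_{\infty}$ (Marcinkiewicz), and the $p=\infty$ case is that same trivial bound.

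First I would fix $f\in L^1(G)$, $f\ge 0$ without loss of generality, and a point $y\in G$ together with some $n$; the task is to control $|f|_{yV_n}=\frac{1}{m(V_n)}\int_{yV_n}f$ by a value of the form $\big(E^{\mathcal B_{n-?}}|f|\big)$ at a suitable nearby point. The key geometric observation is the content of Rubio de Francia's Lemma (\cite[Lemma 2]{rdf78}) together with the covering idea behind \eqref{jlr78cond2}: since $\cup_m H_m$ is dense and each $V_m$ is a fundamental domain, for the given $y$ and $n$ there is an integer $p\ge n$ and an element $t\in H_p$ such that $yV_n$ is covered, up to a null set, by $V_p$-translates sitting inside $tV_n V_n^{-1}V_n$ — more precisely one writes $yV_n\subset \bigcup t_i V_p$ for finitely many $t_i\in H_p$, all of whose translates lie in the ``tripled'' set $t(V_nV_n^{-1}V_n)$ for some $t\in H_p$. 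Hence
\[
\int_{yV_n}f\le \int_{t(V_nV_n^{-1}V_n)}f = \sum_i \int_{t_i V_p}f \le \big(\#\{t_i\}\big)\,m(V_p)\,\big(\text{largest value of }|f|_{t_iV_p}\big),
\]
and since $\#\{t_i\}\,m(V_p)=m(V_nV_n^{-1}V_n)$ this gives $|f|_{yV_n}\le C\,(E^{\mathcal B_p}f)(z)$ for an appropriate $z$, with $C=\sup_n m(V_nV_n^{-1}V_n)/m(V_n)$ finite by \eqref{jlr78cond2}. Taking the supremum over all $y\ni x$ (i.e.\ over $x\in yV_n$) and over $n$, one concludes the pointwise domination $M^{\mathcal R}f(x)\le C\, M^{\mathcal R_0}f(x)$ (possibly after enlarging the sup over $\mathcal R_0$ to include the trivial level, which changes nothing). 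Then
\[
m\big(\{M^{\mathcal R}f>a\}\big)\le m\big(\{M^{\mathcal R_0}f>a/C\}\big)\le \frac{C}{a}\|f\|_1,
\]
which is the desired weak $(1,1)$ inequality.

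The step I expect to be the main obstacle is making the covering argument in the previous paragraph precise in the group $\mathbb T^\omega$: one needs to verify that an arbitrary translate $y+V_n$ can indeed be written, modulo a null set, as a finite disjoint union of translates $t_i+V_p$ with $t_i\in H_p$, and that all these $t_i$ can be taken to lie in a single coset of $H_n$ of the form $t+(V_n+(-V_n)+V_n)$, so that the count $\#\{t_i\}$ is exactly $m(V_n-V_n+V_n)/m(V_p)$. This rests on the refinement statement in \cite[Lemma 2]{rdf78} (each $V_n$ is, up to a null set, the disjoint union of $k_{n}\cdots k_{p-1}$ translates of $V_p$ by elements of $H_p$) combined with the density of $\cup_m H_m$, which forces $\delta(V_p)\to 0$ and hence guarantees that for $p$ large enough every $H_p$-cell meeting $y+V_n$ is contained in the tripled set. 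Writing out this approximation carefully — choosing $p=p(n)$ uniformly so that the constant does not blow up — is the delicate point; once it is in place, the weak-type transference and the interpolation are routine.
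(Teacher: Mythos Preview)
The paper does not give its own proof of this theorem: it is quoted verbatim from \cite[Thm.~8]{rdf78} and used as a black box, so there is nothing in the paper to compare your argument against. That said, your proposal has a genuine gap that is worth naming.

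The step that fails is the passage from
\[
|f|_{yV_n}\le C\,(E^{\mathcal B_p}f)(z)\quad\text{for \emph{some} }z\in t(V_nV_n^{-1}V_n)
\]
to the pointwise domination $M^{\mathcal R}f(x)\le C\,M^{\mathcal R_0}f(x)$. You only know that the largest of the averages $|f|_{t_iV_p}$ controls $|f|_{yV_n}$, but the point $x$ lies in exactly \emph{one} of the disjoint cells $t_iV_p$, and there is no reason this should be the cell with the largest average. This is the same obstruction as in $\mathbb R^n$: the Hardy--Littlewood maximal function is \emph{not} pointwise dominated by a constant times the dyadic maximal function (think of $f$ concentrated near a dyadic boundary), and the analogous phenomenon occurs here. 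So the inequality $M^{\mathcal R}f\le C\,M^{\mathcal R_0}f$ simply need not hold, and the argument as written does not give the weak $(1,1)$ bound.

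What condition \eqref{jlr78cond2} actually buys is a Vitali-type covering lemma for the family $\mathcal R$: if $yV_n\cap zV_m\neq\emptyset$ with $m\ge n$, then $zV_m\subset yV_nV_n^{-1}V_n$, and $m(yV_nV_n^{-1}V_n)\le C\,m(yV_n)$. This ``engulfing'' property lets one run the standard Vitali selection on the level set $\{M^{\mathcal R}f>a\}$ directly (extract a disjoint subfamily whose tripled sets cover), yielding weak $(1,1)$ without any reference to $M^{\mathcal R_0}$. That is almost certainly the route taken in \cite{rdf78}; your reduction to the martingale maximal function is an attractive idea but would need a distributional rather than pointwise comparison to be salvaged.
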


As an immediate consequence
JLR gives 
the following result, which establishes a sufficient condition for the basis $\mathcal{R}$ to differentiate $L_{\text{loc}}^1(G)$ (in this setting, the notion of contraction of a sequence $(S_n)\subset \mathcal{R}$ to a point involves  $m(S_n)\to 0$). 

\begin{corol} (\cite[Corol. 5]{rdf78}.) 
\label{corol5jlr78}
Suppose, in addition to the hypothesis of  Theorem~\ref{teor8jlr78ii}, that  $V_n\subset U_n$ for a basis  $\{U_n\}_{n\in\mathbb{N}}$ of neighbourhoods of $e$. Then:
$$
\lim_{\substack{x\in R\in\mathcal{R}\\ m(R)\to 0}} f_R =f(x) \quad \text{(a.e.)}
$$
for any locally integrable function $f$.
\end{corol}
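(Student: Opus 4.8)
The plan is to deduce the statement from the weak-type maximal bound of Theorem~\ref{teor8jlr78ii} by the textbook passage from a maximal inequality to a differentiation theorem; the one genuinely new ingredient is the role played by the hypothesis $V_n\subset U_n$. Since $\mathbb{T}^\omega$ is compact we have $L^1_{\mathrm{loc}}(\mathbb{T}^\omega)=L^1(\mathbb{T}^\omega)$, and treating real and imaginary parts separately we may assume $f$ real. Under the hypotheses in force, Theorem~\ref{teor8jlr78ii} tells us that $M^{\mathcal{R}}$ is of weak type $(1,1)$. For $x\in\mathbb{T}^\omega$ and $f\in L^1(\mathbb{T}^\omega)$ put
$$
\Lambda f(x):=\sup\Bigl\{\,\limsup_{k}\bigl|\,f_{R_k}-f(x)\,\bigr|\;:\;\{R_k\}_k\subset\mathcal{R},\ x\in R_k,\ m(R_k)\to 0\,\Bigr\};
$$
the conclusion of the corollary is exactly that $\Lambda f=0$ a.e.

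First I would settle the case of a continuous function $g$, and this is the only place where the hypothesis $V_n\subset U_n$ enters. Since the sets $V_n$ decrease and $V_n\subset V_j\subset U_j$ for $j\le n$, we may replace $U_n$ by $U_1\cap\dots\cap U_n$ and thus assume $\{U_n\}$ decreasing; then $\{U_n-U_n\}$ is again a neighbourhood basis of $e$, by continuity of subtraction (choose $W'$ with $W'-W'\subset W$ and $U_n\subset W'$ for $n$ large). If $R_k=y_k+V_{n_k}$ and $x=y_k+v_0\in R_k$ with $v_0\in V_{n_k}$, then $R_k=x+(V_{n_k}-v_0)\subset x+(U_{n_k}-U_{n_k})$; since $m(V_n)=2^{-n}$ is positive and $2^{-n}\to 0$, the requirement $m(R_k)\to0$ forces $n_k\to\infty$, so $R_k$ eventually lies in any prescribed neighbourhood of $x$. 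Continuity of $g$ at $x$ then gives
$$
f_{R_k}=g(x)+\frac{1}{m(R_k)}\int_{R_k}\bigl(g(z)-g(x)\bigr)\,dz\longrightarrow g(x),
$$
hence $\Lambda g(x)=0$ for every $x$.

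For a general $f\in L^1(\mathbb{T}^\omega)$ and $\varepsilon>0$ I would then invoke the density of $C(\mathbb{T}^\omega)$ in $L^1(\mathbb{T}^\omega)$ to write $f=g+h$ with $g$ continuous and $\|h\|_1<\varepsilon$. Subadditivity of $\Lambda$ together with the pointwise bound $\limsup_k|h_{R_k}|\le\limsup_k|h|_{R_k}\le M^{\mathcal{R}}h(x)$ yields
$$
\Lambda f(x)\le \Lambda g(x)+\Lambda h(x)\le 0+M^{\mathcal{R}}h(x)+|h(x)|,
$$
whence $\{\Lambda f>\lambda\}\subset\{M^{\mathcal{R}}h>\lambda/2\}\cup\{|h|>\lambda/2\}$ for every $\lambda>0$. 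The weak $(1,1)$ inequality for $M^{\mathcal{R}}$ and Chebyshev's inequality then give $m(\{\Lambda f>\lambda\})\le C'\varepsilon/\lambda$ with $C'$ independent of $\varepsilon$; letting $\varepsilon\to0$ shows $m(\{\Lambda f>\lambda\})=0$ for all $\lambda>0$, i.e. $\Lambda f=0$ a.e., which is the asserted conclusion.

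I do not anticipate a serious obstacle: once Theorem~\ref{teor8jlr78ii} is granted, the analytic substance is the standard one, and the only point deserving care — and the sole use of the extra hypothesis — is the reduction on the dense class, where one must check that $m(R)\to0$ genuinely forces the index to tend to $\infty$ (so that $V_n\subset U_n$ becomes effective) and that $\{U_n-U_n\}$ stays a neighbourhood basis of $e$. An alternative packaging is to observe that, within $\mathcal{R}$, the conditions $m(R)\to0$ and $\delta(R)\to0$ are equivalent — both amount to $n\to\infty$, using $m(V_n)=2^{-n}$ and $\delta(V_n)\to0$ — so that Theorem~\ref{gywell} applies verbatim.
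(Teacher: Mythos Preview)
Your argument is correct and is precisely the standard passage from a weak-type maximal inequality to a differentiation theorem; the paper does not give an explicit proof of this corollary at all, but merely records it as ``an immediate consequence'' of Theorem~\ref{teor8jlr78ii} (in the spirit of Theorem~\ref{gywell}), which is exactly the mechanism you spell out. One small remark: the corollary is stated in the generality of a locally compact group $G$, so your appeals to $m(V_n)=2^{-n}$ and to compactness of $\mathbb{T}^\omega$ are unnecessarily specific --- in general one only needs that $m(V_n)\to 0$ (which follows since $m(V_n)=m(V_{n-1})/k_{n-1}$) and that $C_c(G)$ is dense in $L^1_{\mathrm{loc}}(G)$, but this does not affect the correctness of what you wrote.
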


In the case in which $G$ is the compact group $\mathbb{T}^\omega$ 
and  $\mathcal{R}$ is our Rubio de Francia basis, the  condition \eqref{jlr78cond2} does not hold, because e.g. for $n\ge 1$,
\begin{gather*}
V_{n^2}=\Bigl(0,\frac1{2^n} \Bigr)^n\times \mathbb{T}^{n,\omega}\\\intertext{and}
V_{n^2}-V_{n^2}+V_{n^2}=\left(\Bigl[0,\frac1{2^{n-1}} \Bigr)\cup \Bigl(\frac{2^n-1}{2^n},1\Bigr)\right)^n\times \mathbb{T}^{n,\omega},
\end{gather*} 
so that
$$
\frac{m(V_{n^2}-V_{n^2}+V_{n^2})}{m(V_{n^2})}=\frac{(3/2^n)^n}{(1/2^{n})^n}=3^n,\quad\text{and}\quad
\sup_n\frac{m(V_{n}-V_{n}+V_{n})}{m(V_{n})}=\infty.
$$ 
Therefore, we can not guarantee (in principle) the result of Theorem  \ref{teor8jlr78ii} for the Rubio de Francia basis $\mathcal{R}$. The additional sufficient condition of the Corollary \ref{corol5jlr78} is satisfied because, for instance, the family $\{V_n-V_n\}_{n\in\mathbb{N}}$ is a basis of (symmetric) neighbourhoods of 0 in $\mathbb{T}^\omega$.
On the Rubio de Francia basis $\mathcal{R}$ the following questions remain open:

\begin{itemize} 
\item Does the converse of Theorem \ref{gywell} for the basis  $\mathcal{R}$ in $\mathbb{T}^\omega$ hold? (in de Guzm\'{an} and Welland theorem in $\mathbb{R}^n$ \cite[Thm. 1.1(b)]{guzmanywelland} the BD $\mathcal{B}$ is required to be homothecy invariant).
\item Is the operator $M^{\mathcal{R}}$ weak type (1,1)?\footnote{We owe this question to Sheldy J. Ombrosi.} 
\item Does $\mathcal{R}$ differentiate $L^\infty(\mathbb{T}^\omega)$?
\end{itemize}
Customizing Jessen's proof for the basis $\mathcal{J}$, we will prove below (Subsection \ref{subsec:neg})  that a certain basis $\mathcal{R}^\ast$  slightly wider than $\mathcal{R}$ does not differentiate $L^\infty(\mathbb{T}^\omega)$.

\subsection{Bases $\mathcal{R}_0$ and $\mathcal{J}$}
\begin{defi} (\cite[p. 153]{saks}, \cite[VII.43]{munroe}.) \label{defsnetfina}
Let $\{\mathcal{M}_n\}_{n\in\mathbb{N}}$ be a monotonic sequence of nets in $\mathbb{T}^\omega$, and $\mathcal{M}:=\bigcup_n \mathcal{M}_n$.
For  each $y\in \mathbb{T}^\omega$ and each $k$, write $I_y^{(k)}$ for the unique element of the net $\mathcal{M}_k$ which contains $y$.
We say that the sequence is \emph{indefinitely fine} if for each $x\in \mathbb{T}^\omega$ and each $\varepsilon>0$ there is $n_0\in\mathbb{N}$ such that $\delta\bigl(I_x^{(n_0)}\bigr)<\varepsilon$. 
\end{defi}

Then, $I_x^{(n)} \Rightarrow x$, and $\mathcal{M}$ is a differentiation basis. 
The following result holds.
\begin{teor} \emph{(\cite[43.7]{munroe}, cf. also \cite[\S 9]{jessen}, \cite[15.7]{saks}.)} \label{diferdeL1} If $\{\mathcal{M}_n\}_{n\in\mathbb{N}}$ is a monotonic sequence of nets  indefinitely fine, then the basis $\mathcal{M}$ differentiates $L^1(\mathbb{T}^\omega)$.
\end{teor}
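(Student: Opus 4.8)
The plan is to run, essentially verbatim, the martingale argument used in the proof of Theorem~\ref{descompCZ}, with the ``indefinitely fine'' hypothesis taking over the role played there by the density of $\bigcup_m H_m$. Fix a real $f\in L^1(\mathbb{T}^\omega)$ (the general case follows by splitting into real and imaginary parts). Put $\mathcal{B}_n:=\sigma(\mathcal{M}_n)$; since $\{\mathcal{M}_n\}$ is monotonic these form an increasing sequence of sub-$\sigma$-algebras, and by \eqref{comparfm1} their conditional expectations are $f_n:=E^{\mathcal{B}_n}f=\sum_{I\in\mathcal{M}_n}f_I\,\chi_I$, so that $f_n(x)=f_{I_x^{(n)}}$ for $m$-a.e.\ $x$, where $I_x^{(n)}$ denotes the unique member of $\mathcal{M}_n$ containing $x$. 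By Theorem~\ref{maxmartinteo}(ii) the sequence $(f_n)$ converges a.e.\ to $E^{\mathcal{B}}f$, with $\mathcal{B}=\sigma\bigl(\bigcup_n\mathcal{B}_n\bigr)$.

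Next I would identify $\mathcal{B}$, modulo $m$-null sets, with the Borel $\sigma$-algebra of $\mathbb{T}^\omega$, which forces $E^{\mathcal{B}}f=f$ a.e.\ and hence $f_n(x)\to f(x)$ a.e. Here ``indefinitely fine'' enters exactly as in Definition~\ref{defsnetfina}: given an open $U$ and (a.e.)\ $x\in U$, choose $\varepsilon>0$ with the $d$-ball $B(x,\varepsilon)\subset U$ and then $n_0$ with $\delta(I_x^{(n_0)})<\varepsilon$, so $I_x^{(n_0)}\subset B(x,\varepsilon)\subset U$; since $\bigcup_n\mathcal{M}_n$ is a \emph{countable} family, $U$ agrees, off an $m$-null set, with a countable union of its members, hence $U$ lies in the $m$-completion of $\mathcal{B}$. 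As the Borel sets are generated by the open sets, this gives that $E^{\mathcal{B}}$ is the identity on $L^1(\mathbb{T}^\omega)$, just as in the proof of Theorem~\ref{descompCZ}.

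It then remains to deduce \eqref{Bdiferintf} from the a.e.\ convergence of the single sequence $(f_n)$. Discard the $m$-null set where $f_n\not\to f$ and the $m$-null set where some $I_x^{(n)}$ fails to be defined. For the remaining $x$, the members of $\mathcal{M}$ containing $x$ are exactly the sets $I_x^{(1)},I_x^{(2)},\ldots$ (the cells of a net at a fixed level being pairwise disjoint), these are nested by monotonicity, and their diameters $\delta(I_x^{(n)})$ are non-increasing and, by indefinite fineness, tend to $0$; moreover each $\delta(I_x^{(n)})>0$, the cells having positive measure. Hence any $\{B_k\}\subset\mathcal{M}$ with $B_k\Rightarrow x$ is of the form $B_k=I_x^{(n_k)}$ with $\delta(I_x^{(n_k)})\to0$, which forces $n_k\to\infty$, so $f_{B_k}=f_{n_k}(x)\to f(x)$. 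Taking the supremum of $\limsup_k f_{B_k}$, resp.\ the infimum of $\liminf_k f_{B_k}$, over all such $\{B_k\}$ yields $\overline{D}\bigl(\int\! f,x\bigr)=\underline{D}\bigl(\int\! f,x\bigr)=f(x)$ a.e., i.e.\ $\mathcal{M}$ differentiates $L^1(\mathbb{T}^\omega)$.

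I expect the only genuinely delicate point to be this last reduction — showing that an arbitrary $\mathcal{M}$-contraction to $x$ is (cofinally) a subsequence of the canonical nested sequence $\bigl(I_x^{(n)}\bigr)_n$ — since that is precisely what collapses the supremum over \emph{all} contracting sequences in the definition of $\overline{D}$ to the plain limit of the martingale $(f_n)$; the remaining ingredients (the martingale maximal and convergence theorem, and $E^{\mathcal{B}}=\mathrm{id}$) are already in place. Finally, the whole argument can be compressed by invoking Theorem~\ref{gywell} instead: for $f\ge0$ one has, a.e., $M^{\mathcal{M}}f(x)=\sup_n f_{I_x^{(n)}}=\sup_n\bigl(E^{\mathcal{B}_n}f\bigr)(x)$, which is measurable (a countable supremum) and of weak type $(1,1)$ by Doob's inequality, Theorem~\ref{maxmartinteo}(i); Theorem~\ref{gywell} then gives directly that $\mathcal{M}$ differentiates $L^1(\mathbb{T}^\omega)$.
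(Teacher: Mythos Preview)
The paper does not actually prove Theorem~\ref{diferdeL1}; it is stated as a known result with references to Munroe, Jessen and Saks, and then applied. So there is no ``paper's own proof'' to compare against, only those external sources.

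Your argument is correct and, in fact, very much in the spirit of what the paper does elsewhere. The martingale route you lay out (set $\mathcal{B}_n=\sigma(\mathcal{M}_n)$, apply Theorem~\ref{maxmartinteo}(ii), and use indefinite fineness to show $\sigma(\bigcup_n\mathcal{B}_n)$ equals the Borel $\sigma$-algebra modulo $m$-null sets so that $E^{\mathcal{B}}=\mathrm{id}$) is exactly the mechanism behind \eqref{convaemartin} in the proof of Theorem~\ref{descompCZ}, just with a general indefinitely fine sequence of nets in place of the concrete $\{\mathcal{N}_m\}$. Your ``compressed'' alternative via Theorem~\ref{gywell} and Doob's inequality is precisely the observation the paper itself makes just before Corollary~\ref{cor:RDFr} (``an immediate consequence of Theorem~\ref{gywell}, because the maximal operator $M^{\mathcal{R}_0}$ is of weak type $(1,1)$''), so that shortcut is also sanctioned by the paper. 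The step you correctly flag as the only subtle one---that an arbitrary $\mathcal{M}$-sequence contracting to $x$ must be of the form $\bigl(I_x^{(n_k)}\bigr)_k$ with $n_k\to\infty$---is handled by your argument (disjointness within each $\mathcal{M}_n$ pins down $B_k=I_x^{(n_k)}$, and positive measure of cells forces $\delta(I_x^{(n)})>0$, so $\delta\to0$ implies $n_k\to\infty$); just be aware that the paper's Definition~\ref{defbasedepossel} allows $\mathcal{B}(y)$ to include sets whose \emph{closure} contains $y$, so in full generality you would also need to discard the $m$-null set of points lying on boundaries of cells, which is harmless here.
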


At the end of \cite{rdf78}, JLR pointed out, in the setting of the locally compact group $G$, that if $\mathcal{R}$  
is defined to be only consisting of the sets  
$tV_n$ ($t\in H_n$), $n=1,2,\ldots$, then Theorem~\ref{teor8jlr78ii} %
is valid without assumptions \eqref{jlr78cond1} and \eqref{jlr78cond2}. %
In order to corroborate this statement with an example,  we provide an immediate consequence of Theorem~\ref{diferdeL1}. It is, on the other hand, an immediate consequence of Theorem \ref{gywell}, because the maximal operator $M^{\mathcal{R}_0}$ is of weak type $(1,1)$.

\begin{corol} 
\label{cor:RDFr}
The basis $\mathcal{R}_0= \left\{t+V_m\colon  m\in\mathbb{N}, t\in H_m\right\}$ 
does differentiate $L^1(\mathbb{T}^\omega)$.
\end{corol}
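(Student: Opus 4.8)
The plan is to deduce the statement directly from the two results recalled just above for precisely this purpose, Theorem~\ref{diferdeL1} and Theorem~\ref{gywell}; each one on its own suffices, and I would present the argument via nets as the main route and the one via the maximal operator as an alternative.

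\emph{Main route (nets).} Recall from the construction preceding Theorem~\ref{descompCZ} that $\mathcal{R}_0=\bigcup_m\mathcal{N}_m$, where $\mathcal{N}_m=\{t+V_m\colon t\in H_m\}$, and that $\{\mathcal{N}_m\}_{m\in\mathbb{N}}$ is a monotonic sequence of nets whose union is the differentiation basis $\mathcal{R}_0$. By Theorem~\ref{diferdeL1} it then suffices to verify that $\{\mathcal{N}_m\}$ is \emph{indefinitely fine}, i.e.\ that $\delta\bigl(I_x^{(m)}\bigr)\to 0$ as $m\to\infty$ for every $x$. Since $I_x^{(m)}$ is a translate $t+V_m$ of $V_m$ by an element $t\in H_m$, and the metric \eqref{metrics} is translation invariant, one has $\delta\bigl(I_x^{(m)}\bigr)=\delta(V_m)$ for all $x$, so it is enough to show $\delta(V_m)\to0$. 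Writing $m=n^2+j$ with $1\le j\le 2n+1$, the set $V_m=\widetilde{V}_m\times\mathbb{T}^{n+1,\omega}$ has its first $n+1$ coordinates confined to intervals of length at most $1/2^n$ (with the possible exception of one coordinate of length $1/2$), while the remaining coordinates are unconstrained but enter \eqref{metrics} with weights $2^{-k}$; a routine estimate then gives $\delta(V_m)<2^{-(n-1)}$, which tends to $0$ as $m\to\infty$. Hence $\{\mathcal{N}_m\}$ is indefinitely fine and Theorem~\ref{diferdeL1} applies.

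\emph{Alternative route (maximal operator).} Alternatively, I would identify the maximal operator $M^{\mathcal{R}_0}$ with the martingale maximal operator $E^\ast$ attached to the filtration $\mathcal{B}_m=\sigma(\mathcal{N}_m)$ already used in the proof of Theorem~\ref{descompCZ}: for a.e.\ $x$ and each $m$ there is a unique $t\in H_m$ with $x\in t+V_m$, and $|f|_{t+V_m}$ equals $f_m(x)=(E^{\mathcal{B}_m}|f|)(x)$ by \eqref{comparfm2} and \eqref{comparfm1}; taking the supremum over $m$ yields $M^{\mathcal{R}_0}f=E^\ast|f|$ a.e. In particular $M^{\mathcal{R}_0}f$ is measurable, and by Theorem~\ref{maxmartinteo}(i) (Doob's inequality) it is of weak type $(1,1)$. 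Theorem~\ref{gywell} then gives at once that $\mathcal{R}_0$ differentiates $L^1(\mathbb{T}^\omega)$.

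I do not expect a genuine obstacle: the corollary is essentially a repackaging of machinery already assembled, and the only thing to check is the elementary diameter bound $\delta(V_m)\to0$ in the first route (equivalently, in the second route, the a.e.\ uniqueness of the net element containing a given point, which is already built into the notion of monotonic sequence of nets).
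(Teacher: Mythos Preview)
Your proposal is correct and follows essentially the same approach as the paper: the paper's proof verifies that $\{\mathcal{N}_m\}$ is indefinitely fine by the diameter estimate $\delta(I)<7/2^{n+1}$ for $I\in\mathcal{N}_m$ with $(n-1)^2<m\le n^2$, and then invokes Theorem~\ref{diferdeL1}, while also noting (just before the statement) the alternative via the weak~$(1,1)$ bound for $M^{\mathcal{R}_0}$ and Theorem~\ref{gywell}. Your diameter bound and parametrization differ cosmetically (and the ``exceptional'' $(n{+}1)$-th coordinate has length $1/2^j\le 1/2$, not exactly $1/2$), but the argument is the same.
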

\begin{proof} The basis $\mathcal{R}_0$ is the union, for $m\in\mathbb{N}$,  
of the monotonic sequence of nets $\{\mathcal{N}_m\}$,  where 
$\mathcal{N}_m= \left\{t+V_m\colon t\in H_m\right\}$. This sequence is indefinitely fine, because
if $I\in \mathcal{N}_m$ and $(n-1)^2<m\le n^2$ ($n\ge 2$), it is easily seen  that 
$$
\delta(I)\le \sum_{j=1}^{n-1} \frac1{2^{n-1+j}}+\frac1{2^{n+1}}
+\sum_{j=n+1}^\infty\frac1{2^j}<\frac7{2^{n+1}}. 
$$
\end{proof}

Any subfamily of a basis that differentiates $L^1$ and
which is in turn a basis of differentiation, also differentiates $L^1$.
In particular, the subfamily of cubic intervals of the
base $\mathcal{R}_0$ that Saks already considered \cite[p. 158]{saks} 
(see \cite[p. 28]{bruckner})
$$
\mathcal{S}:=\bigcup_{m=1}^\infty \mathcal{S}_m\quad \text{ where }
\quad \mathcal{S}_m:=\left\{t+V_{m^2}\colon t\in H_{m^2}\right\},
$$
also differentiates $L^1(\mathbb{T}^\omega)$.

The question (posed by A. Zygmund, see \cite[p. 55]{jessen1950}) about the differentiation of integrals in $L^1(\mathbb{T}^\omega)$ with respect to this basis $\mathcal{J}$ of all the intervals of $\mathbb{T}^\omega$ was answered negatively around 1950  by Jessen \cite{jessen1950,jessen1952}.
The counterexample proposed by Jessen refers to the characteristic function of certain measurable set of positive measure, so in fact he proves that the 
basis $\mathcal{J}$ \emph{does not} differentiate even $L^\infty(\mathbb{T}^\omega)$.
It is indeed a curious phenomenon, since the basis formed by the \emph{intervals} (i.e., the parallelepipeds of edges parallel to the coordinate axes) of $\mathbb{T}^n$ does differentiate $L^\infty(\mathbb{T}^n)$ for all $n\in\mathbb{N}$ \cite[p. 74]{guzmanyellow}.

\subsection{A negative result of differentiation on $\boldsymbol{\mathbb{T}^\omega}$: The  basis $\boldsymbol{\mathcal{R}^\ast}$}
\label{subsec:neg}

\quad 

We begin with questions of nomenclature and notation to briefly represent some dyadic sets in $\mathbb{T}^\omega$.


\begin{defis} 
For $m$, $q\in\mathbb{N}$, $m\ge 2$, and $q\le m$, write $\widetilde{\square}_{m,q}:=\bigl(0,\tfrac1{2^m} \bigr)^q$ and
$$\square_{m,q}:=\widetilde{\square}_{m,q}\times\mathbb{T}^{q,\omega}.$$
Call $\square_{m,q}$ the \emph{$(m,q)$-cube}. 
E.g.,
$V_{m^2}=\square_{m,m}=\widetilde{\square}_{m,q}\times \bigl(0,\frac1{2^m}\bigr)^{m-q}\times\mathbb{T}^{m,\omega}$, for all $m\ge q$.

Consider  in $\mathbb{T}^{\omega}$, for $j\in\mathbb{N}$, the translation $\tau_j^{m}$ which adds $\tfrac1{2^m}$ to the coordinate $x_j$.
Define the sets (we call them \emph{sacks} of the corresponding cubes)  
$$S(\square_{m,q}):=\square_{m,q}\cup\bigl(\cup_{j=1}^q\tau_j^{m}(\square_{m,q})\bigr)$$
and, for $y\in\mathbb{T}^\omega$, 
$S(y+\square_{m,q})=y+S(\square_{m,q})$.

\smallskip
On the other hand, 
write $W_{m,r}:=\square_{m,m}\cup \tau_r^{m}(\square_{m,m})$ $(1\le r\le m)$. Call $W_{m,r}$ a \emph{double $(m,m)$-cube}.
We have $W_{m,r}=\widetilde{W}_{m,r}\times\mathbb{T}^{m,\omega}$, where
$$
\widetilde{W}_{m,r}:=\prod_{j=1}^m (1+\delta_{rj})\cdot \bigl(0,\tfrac1{2^m} \bigr)
\qquad\text{(Kronecker's delta).}
$$
E.g., $W_{m,m}=V_{m^2-1}$, but  $W_{m,j}\notin\mathcal{R}$ when $1\le j\le m-1$ (see Figure \ref{CZ_fig2}). 

\smallskip
We define the \emph{extended Rubio de Francia basis} to be the collection
\begin{equation}\label{extendedrdfbasis}
\mathcal{R}^\ast:=\mathcal{R}\cup\{y+W_{m,r}\colon y\in\mathbb{T}^\omega,\ m\in\mathbb{N},\ m\ge 2,\ 1\le r\le m\}.
\end{equation}
\end{defis}
\begin{figure}[ht]
\begin{center}
\quad\hspace*{-10pt}\includegraphics[scale=.6]{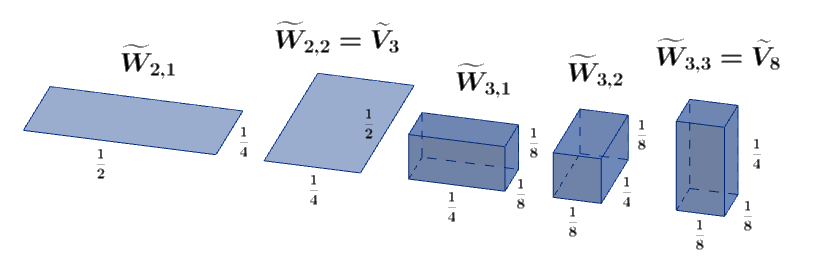}
\caption{First members of the family $\widetilde{W}_{m,r}$. 
}\label{CZ_fig2}
\end{center}
\end{figure}

\begin{lema} \label{lemma18} Let $Q\in\{y+\square_{m,q}\colon y\in\mathbb{T}^\omega\}$. For each point $x\in S(Q)$ there is an interval $I_x\in\mathcal{R}^\ast$ such that
$$
\frac{m( I_x\cap Q)}{m(I_x)}\ge \frac 12.
$$
\end{lema}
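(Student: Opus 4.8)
The plan is to fix a representative $Q=y+\square_{m,q}$ and, by translation invariance of the Haar measure, reduce to the case $y=\bar 0$, so $Q=\square_{m,q}=\bigl(0,\tfrac1{2^m}\bigr)^q\times\mathbb{T}^{q,\omega}$. Its sack is $S(Q)=\square_{m,q}\cup\bigcup_{j=1}^q\tau_j^m(\square_{m,q})$, so every $x\in S(Q)$ lies either in $Q$ itself or in exactly one of the $q$ "neighbour" translates $\tau_j^m(\square_{m,q})$ (these are pairwise disjoint mod null sets since they differ in a single coordinate by $\tfrac1{2^m}$, while the edge length is $\tfrac1{2^m}$). The key point is that the coordinates indexed by $j>q$ play no role: $S(Q)$ is a product of a finite-dimensional dyadic configuration in the first $q$ coordinates with the full $\mathbb{T}^{q,\omega}$, and any candidate interval $I_x$ will have the same product structure, so the measure ratio $m(I_x\cap Q)/m(I_x)$ equals the corresponding ratio computed in $\mathbb{T}^q$ on the first $q$ coordinates.

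First I would handle the easy case $x\in Q$: then simply take $I_x=Q\in\mathcal{R}$ (it is $\square_{m,q}=\bar 0+\square_{m,q}$), giving ratio $1\ge\tfrac12$. The substantive case is $x\in\tau_r^m(\square_{m,q})$ for some $r\in\{1,\dots,q\}$, i.e. $x$ has its $r$-th coordinate in $\bigl(\tfrac1{2^m},\tfrac2{2^m}\bigr)$ and all other coordinates $x_1,\dots,x_q$ (for indices $\ne r$) in $\bigl(0,\tfrac1{2^m}\bigr)$. Here I would choose $I_x$ to be a translate of the double cube: put $I_x=z+W_{m,r}$ where $W_{m,r}=\square_{m,m}\cup\tau_r^m(\square_{m,m})$ and $z$ is chosen so that $I_x$ contains $x$ and its "$\square_{m,m}$-half'' coincides (in the first $q$ coordinates) with $Q$. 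Concretely, since $W_{m,r}$ in its first $q$ coordinates is the box $\prod_{j=1}^q(1+\delta_{rj})\cdot\bigl(0,\tfrac1{2^m}\bigr)$ — which is exactly $\widetilde\square_{m,q}$ in every coordinate except the $r$-th, where it has doubled length $\tfrac2{2^m}$ — one translates only in the coordinates indexed by $j>q$ to capture $x$ there (those coordinates of $W_{m,r}$ are all of $\mathbb{T}$, so in fact no translation is even needed there, and one may take $z=\bar 0$). Then $I_x\in\mathcal{R}^\ast$ by definition \eqref{extendedrdfbasis}, $x\in I_x$, and
$$
\frac{m(I_x\cap Q)}{m(I_x)}=\frac{m(\square_{m,m}\text{-half}\cap\square_{m,q})}{m(W_{m,r})}=\frac{(1/2^m)^q}{2\cdot(1/2^m)^q}=\frac12,
$$
where I have used $m(W_{m,r})=2\,m(\square_{m,m})$ restricted to the first $q$ coordinates and $m(V_{m^2})=m(\square_{m,m})$ only contributes $(1/2^m)^q$ on those coordinates (the coordinates $q<j\le m$ of both $\square_{m,q}$ and $W_{m,r}$ are $\mathbb{T}$, contributing factor $1$ to both numerator and denominator; likewise for $j>m$).

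The only real care needed — and the step I'd flag as the main obstacle to writing cleanly — is bookkeeping the interaction between the three index ranges $1\le j\le q$, $q<j\le m$, $j>m$ when comparing $\square_{m,q}$ (whose coordinates are $\bigl(0,\tfrac1{2^m}\bigr)$ for $j\le q$ and $\mathbb{T}$ for $j>q$) with $W_{m,r}$ (whose coordinates are $\bigl(0,\tfrac1{2^m}\bigr)$ for $j\le m$, $j\ne r$, doubled for $j=r$, and $\mathbb{T}$ for $j>m$): one must verify that the intersection $I_x\cap Q$ really is the "single cube half'' $\square_{m,m}\cap\square_{m,q}=\square_{m,q}$ and has the claimed measure, and that $x$ indeed belongs to the "doubled half'' $\tau_r^m(\square_{m,m})$ of $I_x$. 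These are all elementary one-dimensional interval computations in each coordinate, multiplied together, and the normalization $m(\mathbb{T})=1$ makes the infinitely many trivial factors harmless. I would present the two cases ($x\in Q$; $x\in\tau_r^m(Q)$) explicitly and, if desired, note that when $r>q$ would occur it cannot, since the sack $S(Q)$ only translates coordinates $j\le q$, so $r\in\{1,\dots,q\}$ always and the double cube $W_{m,r}$ with this same $r$ is available in $\mathcal{R}^\ast$ (it requires only $1\le r\le m$, which holds).
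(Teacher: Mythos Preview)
Your overall strategy---reduce to $y=\bar 0$, treat the two cases $x\in Q$ and $x\in\tau_r^m(Q)$, and in the latter use the double cube $W_{m,r}$---is exactly the paper's. But there is a genuine error in the ``easy'' case.

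You take $I_x=Q=\square_{m,q}$ and assert $\square_{m,q}\in\mathcal{R}$. This is false whenever $q<m$ (and that is precisely the case needed later, in Lemma~\ref{lemajess52}, where $Q_\alpha=y(\alpha)+\square_{m(\alpha),n}$ with $m(\alpha)>n$ for all $k$-cells with $k\ge 2$). The sets in $\mathcal{R}$ are translates of the specific fundamental domains $V_k$; by their construction, every $V_k$ with $(n-1)^2<k\le n^2$ has exactly $n$ nontrivial factors, and the only $V_k$ that is a $q$-dimensional cube of edge $1/2^m$ is $V_{q^2}$, forcing $m=q$. Nor is $\square_{m,q}$ a translate of any $W_{m',r}$, since those are not cubes. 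The paper fixes this by taking instead $I_x=V_{m^2}=\bigl(0,\tfrac1{2^m}\bigr)^m\times\mathbb{T}^{m,\omega}\subset Q$, which \emph{is} in $\mathcal{R}$ and still gives ratio $1$.

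There is also a smaller slip in your bookkeeping for the second case: you write that the coordinates $q<j\le m$ of $W_{m,r}$ are all of $\mathbb{T}$. They are not; they are $\bigl(0,\tfrac1{2^m}\bigr)$, since $\widetilde W_{m,r}\subset\mathbb{T}^m$. Fortunately these factors contribute $(1/2^m)^{m-q}$ to both numerator and denominator, so the ratio is still $\tfrac12$ and your conclusion survives. (For the same reason your claim that one ``may take $z=\bar 0$'' to ensure $x\in I_x$ is not justified when $q<m$; but the lemma as stated does not actually require $x\in I_x$, and the paper's proof does not arrange it either.)
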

\begin{figure}[ht]
\begin{center}
\includegraphics[scale=.4]{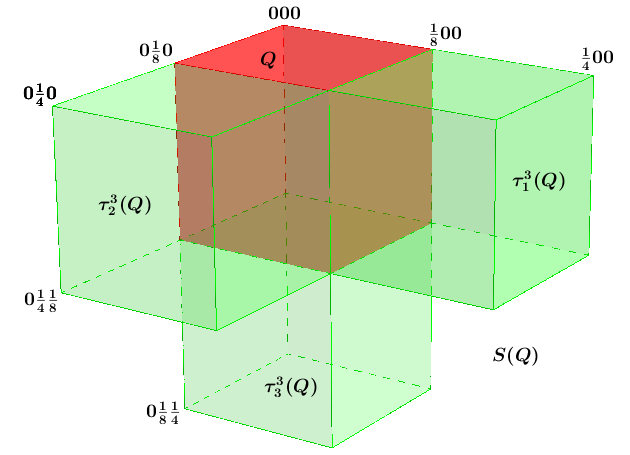}
\caption{The sack of the cube $Q=\square_{3,3}$.}\label{CZ_figsack}
\end{center}
\end{figure}
\begin{proof} First consider any case in which $y=\overline{0}$, $Q=\square_{m,q}$. Then, if $x\in Q$ we can take the interval
$I_x^{0}=\widetilde{\square}_{m,q}\times\bigl(0,\frac1{2^m}\bigr)^{m-q}\times\mathbb{T}^{m,\omega}=V_{m^2}\in\mathcal{R}$. We have $I_x^{0}\subset Q$,
and
$$
\frac{m(I_x^{0}\cap Q)}{m(I_x^{0})}=1.
$$
Otherwise, if $x\in\tau_r(Q)$ for any $r$, $1\le r\le q$, 
define ($\delta_{rj}$ is Kronecker's delta)
$$
D_{m,r}^{q}:=\prod_{j=1}^q (1+\delta_{rj})\cdot \bigl(0,\tfrac1{2^m} \bigr)\subset \mathbb{T}^q.
$$
Then, let us take
$I_x^{0}=D_{m,r}^{q}\times\bigl(0,\tfrac1{2^m}\bigr)^{m-q}\times\mathbb{T}^{m,\omega}$.
We have $I_x^{(0)}=W_{m,r}\in\mathcal{R}^\ast$, $I_x^{0}\cap Q=\widetilde{\square}_{m,q}\times\bigl(0,\frac1{2^m}\bigr)^{m-q}\times\mathbb{T}^{m,\omega}=\square_{m,m}$, and
$$
\frac{m(I_x^{0}\cap Q)}{m(I_x^{0})}=\frac{m(\square_{m,m})}{m(W_{m,r})}=\frac12.
$$
In a general case in which $y\ne \overline{0}$, take $I_x=y+I_x^{0}$.  The lemma follows.
\end{proof}

\begin{lema} \label{lemajess52} Let $n\ge 2$ be a fixed integer. 
It is possible to find in $\mathbb{T}^\omega$ an enumerable family of pairwise disjoint intervals 
$\{Q_\alpha\}_{\alpha\in A_n}$, $Q_\alpha=y(\alpha)+\square_{m(\alpha),n}$ ($y(\alpha)\in \mathbb{T}^\omega$,  $m(\alpha)\ge n$), 
with the sets $S(Q_\alpha)$ also pairwise disjoint, and:
\begin{enumerate}
\item If $C_n:=\bigcup_{\alpha} Q_\alpha$, and $N_n:=\mathbb{T}^\omega\setminus \bigl(\cup_{\alpha} S(Q_\alpha)\bigr)$, then $m(C_n)=\frac 1{n+1}$, and  $m(N_n)=0$.
\item For every $x\notin N_n$ there exists an interval  $I_x^{n}\in \mathcal{R}^\ast$ 
(whose first $n$ edges are $\le 1/2^{n-1}$; in fact, $I_x^{n}$ is a translate either of a cube $V_{n^2}$, or of a double cube $W_{n^2-1,r}$) 
such that
$$\frac{m(I_x^{n}\cap C_n)}{m(I_x^{n})}\ge \frac 12.$$
\end{enumerate}

\end{lema}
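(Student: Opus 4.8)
The plan is to build the family $\{Q_\alpha\}_{\alpha\in A_n}$ by a Vitali-type iterative exhaustion, mimicking Jessen's classical construction for the basis $\mathcal{J}$ but keeping careful track of the ``sacks'' $S(Q_\alpha)$, which are the ingredient that makes Lemma \ref{lemma18} applicable. Start from a dyadic cube of $\mathbb{T}^\omega$ of side $1/2^m$ in its first $n$ coordinates (i.e. a translate of $\square_{m,n}$), chosen so that the $n$ extra translates forming its sack are disjoint from it; with $m$ large this just needs $2^{-m}$ small, and each such cube $Q$ has $m(Q)=2^{-mn}$ while its sack has $m(S(Q))=(n+1)\,2^{-mn}$, so the ratio $m(Q)/m(S(Q))=\tfrac1{n+1}$. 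I would then tile (a.e.) $\mathbb{T}^\omega$ by a net $\mathcal{N}_{m}$ of such translates, select from it a maximal disjoint subcollection whose sacks are pairwise disjoint, and observe that the union of the selected sacks has measure a fixed fraction of the whole space; on the leftover open set one refines to the next relevant scale (replacing $m$ by $m+1$ along the subsequence that produces cubes of the form $V_{n^2},W_{n^2-1,r}$ translated, as the statement demands) and repeats. Iterating, the complements shrink to a null set $N_n$, the union $C_n=\bigcup_\alpha Q_\alpha$ accumulates measure, and by normalizing the bookkeeping one arranges $m(C_n)=\tfrac1{n+1}$ exactly — the exactness is obtained by stopping the last generation partway, discarding finitely or countably many selected cubes to trim $m(C_n)$ down to $\tfrac1{n+1}$, which does not affect $m(N_n)=0$ since the discarded sacks can be re-covered at a finer scale. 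This proves (1).

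For (2): given $x\notin N_n$, by construction $x$ lies in some sack $S(Q_\alpha)$ with $Q_\alpha=y(\alpha)+\square_{m(\alpha),n}$. Apply Lemma \ref{lemma18} to $Q=Q_\alpha$: it produces $I_x\in\mathcal{R}^\ast$, a translate of either $V_{m(\alpha)^2}$ or of a double cube $W_{m(\alpha)^2-1,r}$ (this is exactly the ``$q=n$, $m=m(\alpha)$'' output of that lemma, since $\square_{m(\alpha),n}$ has $I_x^0=V_{m(\alpha)^2}$ or $W_{m(\alpha),r}=W_{m(\alpha)^2-... }$—I will make the index alignment precise so the side of the first $n$ edges is $\le 1/2^{n-1}$ as claimed, which holds because $m(\alpha)\ge n$ forces side $\le 2^{-n}$ for $V$ and $\le 2^{-(n-1)}$ for the doubled edge of $W$), with $m(I_x\cap Q_\alpha)/m(I_x)\ge\tfrac12$. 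Since $Q_\alpha\subset C_n$, we get $m(I_x\cap C_n)/m(I_x)\ge m(I_x\cap Q_\alpha)/m(I_x)\ge\tfrac12$, which is (2). I would set $I_x^n:=I_x$.

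The main obstacle is the simultaneous bookkeeping in the iteration: one must choose the successive scales $m$ so that the selected objects are genuinely of the two permitted types (plain cube $V_{n^2}$-translate or double cube $W_{n^2-1,r}$-translate after the appropriate identification of indices) while still (a) keeping all sacks in a given generation pairwise disjoint, (b) keeping them disjoint from sacks of earlier generations, and (c) retaining enough measure at each stage that the complements do in fact decrease to a null set. Disjointness of sacks within one net is the delicate point, since the sack $S(\square_{m,n})$ protrudes beyond the cube in $n$ directions; I would handle it by selecting cubes from the net $\mathcal{N}_m$ that are ``$3$-separated'' in each of the first $n$ dyadic coordinates, i.e. passing to a sublattice of index $3^n$, which costs only a fixed constant factor in the measure captured per generation and therefore does not prevent $m(N_n)=0$ in the limit. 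The normalization $m(C_n)=\tfrac1{n+1}$ is then arranged, as above, by truncating the construction; everything else is the routine Vitali/Jessen exhaustion argument, invoking Theorem \ref{diferdeL1} or just countable additivity of $m$ to pass to the limit.
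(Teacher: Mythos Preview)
Your outline has the right skeleton --- iterate a selection of cubes with disjoint sacks, then invoke Lemma~\ref{lemma18} --- but there is a genuine logical gap in how you obtain $m(C_n)=\tfrac1{n+1}$, and the detour through $3$-separation and truncation is both unnecessary and, as written, circular. Observe that if the sacks $S(Q_\alpha)$ are pairwise disjoint and cover $\mathbb{T}^\omega$ a.e.\ (which is exactly the pair of conditions ``$m(N_n)=0$'' and ``sacks disjoint'' that you are already aiming for), then automatically
\[
m(C_n)=\sum_\alpha m(Q_\alpha)=\frac1{n+1}\sum_\alpha m\bigl(S(Q_\alpha)\bigr)=\frac1{n+1},
\]
since each cube is exactly $\tfrac1{n+1}$ of its sack. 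So no truncation is needed. Worse, your truncation step is circular: discarding cubes puts their sacks into $N_n$, and you propose to ``re-cover at a finer scale'', but that re-covering adds new cubes to $C_n$ and changes $m(C_n)$ again. You cannot simultaneously trim $C_n$ and keep the sacks covering.

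The paper's construction avoids all of this with a clean self-similar recursion and no separation trick. One first tiles $\mathbb{T}^\omega$ into $0$-cells of side $1/2^{n-1}$ in the first $n$ coordinates. Each $0$-cell is halved in each of those $n$ coordinates into $2^n$ $1$-cells of side $1/2^n$; the $1$-cell at the ``origin corner'' is declared principal, and its sack occupies exactly $n+1$ of the $2^n$ $1$-cells (the principal one plus its $n$ axial neighbours), all lying \emph{inside} the parent $0$-cell. The remaining $2^n-(n+1)$ $1$-cells are themselves cubes of side $1/2^n$, and one recurses in each of them. Disjointness of sacks across generations is then automatic (each sack sits inside its parent cell, disjoint from sibling cells), and the measure computation is the geometric series
\[
m(C_n)=2^{n(n-1)}\cdot\frac1{2^{n^2}}\sum_{k\ge0}\Bigl(\frac{2^n-(n+1)}{2^n}\Bigr)^k=\frac1{n+1}.
\]
Your part~(2), on the other hand, is fine and matches the paper: once $x\in S(Q_\alpha)$ for some $\alpha$, Lemma~\ref{lemma18} gives the required $I_x\in\mathcal{R}^\ast$ with $m(I_x\cap Q_\alpha)/m(I_x)\ge\tfrac12$, and $Q_\alpha\subset C_n$ finishes it.
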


\begin{figure}[ht]
\begin{center}
\includegraphics[scale=.6]{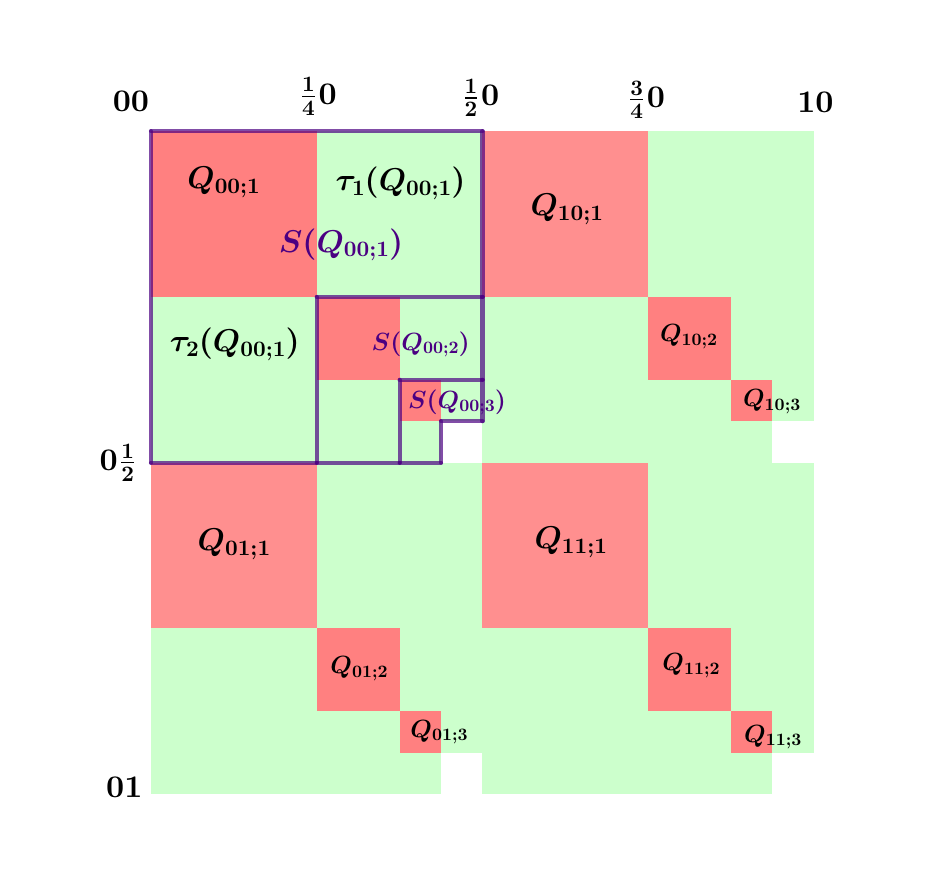}
\includegraphics[scale=.35]{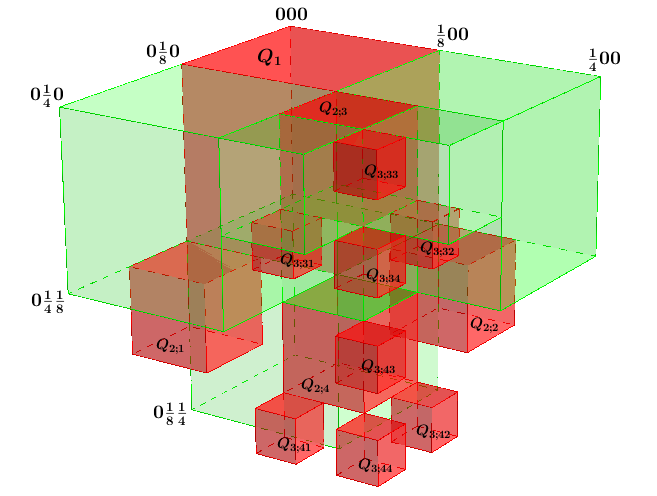}
\caption{The construction in Lemma \ref{lemajess52}, $n=2$ and 3.} 
\label{CZ_fig3}
\end{center}
\end{figure}

\begin{proof} (In what follows, we write $\boldsymbol{t_1\ldots,t_n\overline{0}}$ for  the point
$\bigl(t_1,\ldots,t_n,0^{(n}\bigr)\in\mathbb{T}^\omega$.) 
In general, for each $n\ge 2$, we consider the division of $\mathbb{T}^\omega$ into $2^{n(n-1)}$ open cubes (call them \emph{$0$-cells}) of edge $1/2^{n-1}$ by the \lq\lq{}hyperplanes\rq\rq{} 
$$
x_i=\frac j{2^{n-1}} \qquad(i=1,\ldots,n;\ j=0,1,\ldots, 2^{n-1}-1).
$$
The $0$-cells have the form
$$
I_{i_1\ldots i_n}^{(n)}=\boldsymbol{\tfrac {i_1}{2^{n-1}}\cdots \tfrac {i_n}{2^{n-1}}\overline{0}}+\bigl(0,\tfrac1{2^{n-1}}\bigr)^n\times\mathbb{T}^{n,\omega}, 
$$
where $(i_1,\ldots, i_n)\in\{0,1,\ldots,2^{n-1}-1\}^n$.

Each one of the $0$-cells is firstly subdivided in $2^n$ open cubic intervals of edge length $1/2^n$ ($1$-\emph{cells}).  
As an example, the $1$-cells of the $I_{0\ldots 0}^{(n)}$ $0$-cell have the form 
$$
\boldsymbol{\tfrac {j_1}{2^{n}}\cdots \tfrac {j_n}{2^{n}}\overline{0}}+\square_{n,n},
$$
where $(j_1,\ldots, j_n)\in\{0,1\}^n$.

Among these $1$-cells, we call $\boldsymbol{0\ldots0\overline{0}}+\square_{n,n}$ the \emph{principal $1$-cell} and denote it by $Q_{0\ldots 0;1}^{(n)}$ (abbreviated by $Q_1^{(n)}$).  
The sack of $Q_1^{(n)}$ is
$$
S(Q_1^{(n)})=Q_1^{(n)}\cup\Bigl(\bigcup_{j=1}^n \bigl(\boldsymbol{\tfrac{\delta_{1j}}{2^n}\cdots \tfrac{\delta_{nj}}{2^n}\overline{0}} +Q_1^{(n)}\bigr) \Bigr) \quad\text{(Kronecker's $\delta_{ij}$),}
$$
and we have $m(S(Q_1^{(n)}))=(n+1)\cdot m(Q_1^{(n)})$.

In the $0$-cell $I_{0\ldots 0}^{(n)}$, apart from the $(n+1)$ $1$-cells which form the sack $S(Q_1^{(n)})$, there remain other $2^n-(n+1)$ $1$-cells. In each one of these is carried out a subdivision into $2^n$ open cubes of edge $1/2^{n+1}$ ($2$-\emph{cells}), one of which is the principal $2$-cell $Q_{2;\beta}^{(n)}=\boldsymbol{y_\beta}+\square_{n+1,n}$, with an appropriate $\boldsymbol{y_\beta}\in\mathbb{T}^\omega$. There are $2^n-(n+1)$ principal $2$-cells for each principal $1$-cell. Now, we forget the sacks of the principal $2$-cells, and in all the remaining $2$-cells we proceed inductively.

The family  $\{Q_\alpha^{(n)}\}_{\alpha\in A_n}$ of the statement of the Lemma is formed by all the principal $k$-cells ($k\ge 1$) in this construction. The indicial set $A_n$ can be defined explicitly (look at Figure \ref{CZ_fig3}), but this is not essential, and for clarity of exposition we will avoid doing it.
On the other hand, from the inductive definition it is immediate that 
$$
Q_\alpha^{(n)}\cap Q_\beta^{(n)}=\emptyset \quad \text{ and }\quad
S(Q_\alpha^{(n)})\cap S(Q_\beta^{(n)})=\emptyset \quad \text{ for all $\alpha\ne \beta$ ($n\in\mathbb{N}$, $\alpha,\beta\in A_n$). }
$$

\smallskip
(1) Let now $C_n:=\bigcup_{\alpha\in A_n} Q_\alpha^{(n)}$.
Then, 
$$
m(C_n)=\sum_{\alpha\in A_n}m(Q_\alpha^{(n)})=\frac{2^{n(n-1)}}{2^{n^2}}\sum_{k=0}^\infty\Bigl( \frac{2^n-(n+1)}{2^n} \Bigr)^k =\frac1{n+1},
$$
and
$$
m\Bigl(\bigcup_{\alpha\in A_n} S(Q_\alpha^{(n)})\Bigr)=\sum_{\alpha\in A_n}m(S(Q_\alpha^{(n)}))
=(n+1)\sum_{\alpha\in A_n}m(Q_\alpha^{(n)})
=(n+1) m(C_n)=1,
$$
from which, denoting $N_n=\mathbb{T}^\omega\setminus \Bigl(\bigcup_{\alpha\in A_n} S(Q_\alpha^{(n)})\Bigr)$, we have $m(N_n)=0$.

\smallskip
(2) Let $x\notin N_n$. Then, there exists $\alpha_0\in A_n$ such that $x\in S(Q_{\alpha_0}^{(n)})$. Applying Lemma \ref{lemma18}, there exists an interval $I_x\in\mathcal{R}^\ast$ (in fact, either a cube of edge not greater than $1/2^n$, or a double cube) such that
$$
\frac{m(I_x\cap Q_{\alpha_0}^{(n)} )}{m(I_x)}=\frac{m(I_x\cap C_n)}{m(I_x)}\ge \frac 12, 
$$
as required.
\end{proof}

\begin{teor} \label{contradif}
The Rubio de Francia extended basis $\mathcal{R}^\ast$  does not  differentiate $L^\infty(\mathbb{T}^\omega)$. 
\end{teor}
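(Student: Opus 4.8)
The plan is to customize Jessen's counterexample for the basis $\mathcal{J}$: I will produce a \emph{single} function $f=\chi_E\in L^\infty(\mathbb{T}^\omega)$, with $E$ a measurable set satisfying $0<m(E)<1$, such that the upper derivative $\overline{D}\bigl(\int\!f,x\bigr)$ is at least $\tfrac12$ for almost every $x$, while $f(x)=0$ on the positive-measure set $F:=\mathbb{T}^\omega\setminus E$. This violates \eqref{Bdiferintf} for $f$, and since $f\in L^\infty(\mathbb{T}^\omega)$ this is precisely what it means for $\mathcal{R}^\ast$ not to differentiate $L^\infty(\mathbb{T}^\omega)$. Almost all of the content is already packaged in Lemma \ref{lemajess52}, so what remains is an assembly.

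First I would fix a strictly increasing sequence of integers $2\le n_1<n_2<\cdots$ with $\sum_{j\ge 1}\tfrac1{n_j+1}<1$ (for instance $n_j=2^{j+1}$), apply Lemma \ref{lemajess52} with $n=n_j$ for each $j$ to obtain the sets $C_{n_j}$, with $m(C_{n_j})=\tfrac1{n_j+1}$, and the null sets $N_{n_j}$, and set
\[
E:=\bigcup_{j\ge 1}C_{n_j}\qquad\text{and}\qquad N:=\bigcup_{j\ge 1}N_{n_j}.
\]
Then $0<m(C_{n_1})\le m(E)\le\sum_{j\ge 1}\tfrac1{n_j+1}<1$, so both $E$ and $F$ have positive measure; moreover $\chi_E\in L^\infty(\mathbb{T}^\omega)$ and $m(N)=0$.

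The heart of the argument is to show $\overline{D}\bigl(\int\!\chi_E,x\bigr)\ge\tfrac12$ for every $x\notin N$. Fix such an $x$. For each $j$ we have $x\notin N_{n_j}$, so part (2) of Lemma \ref{lemajess52} supplies an interval $I_x^{n_j}\in\mathcal{R}^\ast$ with $x\in I_x^{n_j}$ and
\[
(\chi_E)_{I_x^{n_j}}=\frac{m\bigl(I_x^{n_j}\cap E\bigr)}{m\bigl(I_x^{n_j}\bigr)}\ge\frac{m\bigl(I_x^{n_j}\cap C_{n_j}\bigr)}{m\bigl(I_x^{n_j}\bigr)}\ge\frac12 .
\]
Since the first $n_j$ edges of $I_x^{n_j}$ are at most $1/2^{n_j-1}$ (Lemma \ref{lemajess52}) and, all coordinates lying in $[0,1)$, the coordinates beyond the $n_j$-th contribute at most $\sum_{k>n_j}2^{-k}=2^{-n_j}$ to the metric \eqref{metrics}, a short computation of the kind carried out in the proof of Corollary \ref{cor:RDFr} gives $\delta\bigl(I_x^{n_j}\bigr)<3\cdot 2^{-n_j}$; hence $\delta\bigl(I_x^{n_j}\bigr)\to 0$ and $I_x^{n_j}\Rightarrow x$ as $j\to\infty$. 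Using the sequence $\{I_x^{n_j}\}_j$ as a competitor in the supremum defining $\overline{D}$, we get $\overline{D}\bigl(\int\!\chi_E,x\bigr)\ge\limsup_j (\chi_E)_{I_x^{n_j}}\ge\tfrac12$.

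Consequently $\overline{D}\bigl(\int\!\chi_E,\cdot\bigr)\ge\tfrac12$ almost everywhere, while $\chi_E=0$ on the positive-measure set $F$; thus \eqref{Bdiferintf} fails for $f=\chi_E$, so $\mathcal{R}^\ast$ does not differentiate $\int\!\chi_E$ and, a fortiori, it does not differentiate $L^\infty(\mathbb{T}^\omega)$. The one point that really calls for care is exactly the content already isolated in Lemma \ref{lemajess52}: one must balance two competing requirements, namely that the intervals $I_x^{n_j}$ honestly contract to $x$ (each contains $x$ and their diameters vanish) while the sets $C_{n_j}$ carrying the mass stay small enough in total to keep $m(E)<1$. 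That is where the real difficulty sits; once Lemma \ref{lemajess52} is granted, the deduction above is routine.
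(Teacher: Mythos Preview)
Your proposal is correct and follows essentially the same argument as the paper's proof: both take a sparse increasing sequence $(n_j)$ so that $\sum_j 1/(n_j+1)<1$ (the paper uses $\le 3/4$), set $E=\bigcup_j C_{n_j}$ and $N=\bigcup_j N_{n_j}$, invoke Lemma~\ref{lemajess52}(2) at each $x\notin N$ to obtain $I_x^{n_j}\in\mathcal{R}^\ast$ with averages $\ge\tfrac12$, and verify $\delta(I_x^{n_j})<3\cdot 2^{-n_j}$ so that the sequence contracts to $x$. Your diameter estimate and the paper's $\delta(I_x^{(p)})\le\delta(W_{n_p,1})<3/2^{n_p}$ are the same computation.
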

\begin{proof} (This argumentation is taken from Jessen \cite{jessen1952}.) 

Choose an increasing sequence of positive integers $(n_p)_{p=1}^\infty$ ($n_1\ge 2$) such that $\sum_p 1/(n_p+1)\le 3/4$. Then, the union $C:=\bigcup_{p=1}^\infty C_{n_p}$ is a measurable set whose measure satisfies $0<\frac1{n_1+1}\le m(C)\le 3/4$, and the union  $N:=\bigcup_{p=1}^\infty N_{n_p}$ is a null measurable set, since for each $p$, the set $N_{n_p}$ is measurable  and  $m(N_{n_p})=0$.

If $x\notin N$, then $x\in\bigcup_{\alpha}S(Q_\alpha^{(n_p)})$ for every $p$, and thus there exists a sequence of indexes $(\alpha_p)_{p=1}^\infty$ such that $x\in S(Q_{\alpha_p}^{(n_p)})$ for each $p$. Then, applying Lemma \ref{lemajess52}(2), for each $p$ there exists an interval 
$I_x^{(p)}\in\mathcal{R}^\ast$ such that $\delta(I_x^{(p)})\le \delta(W_{n_p,1})<3/2^{n_p}$ (consequently these intervals $I_x^{(p)}$ form a sequence of $\mathcal{R}^\ast$ contracting to the point $x$), and
$$
\frac{m(C\cap I_x^{(p)})}{m(I_x^{(p)})}\ge \frac 12.
$$
Consider the characteristic function $\chi_C$. For all $x\in\mathbb{T}^\omega\setminus N$ (i.e., a.e. in $\mathbb{T}^\omega$), we have
$$
\limsup_{p\to\infty} \frac1{m(I_x^{(p)})}\int_{I_x^{(p)}}\chi_C(y)\,dy=\limsup_{p\to\infty} \frac{m(C\cap I_x^{(p)})}{m(I_x^{(p)})}\ge \frac 12,
$$
 which immediately implies that $\overline{D}\bigl( \int\!\chi_C,x \bigr)\ge\frac12$ for almost all $x\in\mathbb{T}^\omega$. 
But $\chi_C(x)=0<\frac12$ for all $x\notin C$, a set of measure $\ge 1/4$.
It follows that $\mathcal{R}^\ast$ does not differentiate $\int\!\chi_C$.
\end{proof}

\medskip
\noindent\textbf{Remarks.} 

\smallskip
(1) The proof of Theorem \ref{contradif} in fact shows that the subfamily extracted from $\mathcal{R}^\ast$ which is formed by the cubes $\{y+V_{m^2}\colon y\in\mathbb{T}^\omega\text{,}\ m\ge 2\}$ and the double cubes $\{y+W_{m,r}\colon y\in\mathbb{T}^\omega\text{,}\ m\ge 2\text{,}\ 1\le r\le m\}$ (this subfamily is not contained in the Rubio de Francia basis $\mathcal{R}$) does not  differentiate $L^\infty(\mathbb{T}^\omega)$.

\smallskip
(2) The question whether the DB formed only by the cubes $\{y+V_{m^2}\colon y\in\mathbb{T}^\omega\text{,}\ m\ge 2\}$  does  differentiate $L^\infty(\mathbb{T}^\omega)$ (with our notion of contraction of a sequence to a point) remains open for us at the moment. 

\smallskip
(3)  Dieudonn\'{e} \cite{dieudonne} also proves that the basis of intervals in $\mathbb{T}^\omega$ (in fact, the subfamily of cubic intervals in $[0,1]^\omega$), does not differentiate $L^\infty(\mathbb{T}^\omega)$ (see \cite[p. 28]{bruckner}). But Dieudonn\'{e} works with the notion of contraction to a point for generalized sequences in the Moore-Smith sense $\{S_\alpha\}_{\alpha\in D}\subset \mathcal{B}(y)$, being $D$ a directed set \cite[p. 81-86]{kelley}, as we explain next:

Let $\mathcal{F}$ be the set of finite subsets of $\mathbb{N}$. For each  $J\in\mathcal{F}$ we consider
$$
\mathbb{T}^\omega=\mathbb{T}^J\times \mathbb{T}^{J,\omega}
$$
in such a way that, if $x\in \mathbb{T}^\omega$, $x=(x_J,x_{J'})$ with $x_J\in\mathbb{T}^J$ and $x_{J'}\in\mathbb{T}^{J,\omega}$.
Dieudonn\'{e} deals with the DB $\mathcal{D}=\bigcup_{x\in\mathbb{T}^\omega}\mathcal{D}(x)$ where $\mathcal{D}(x)$ is the net (according to the set $\mathbb{N}\times\mathcal{F}$ directed by the order relation
 $(n_1,J_1)\le(n_2,J_2)$ if and only if $n_1\le n_2$ and $J_1\subseteq J_2$) that consists of the cubic intervals
\begin{equation}\label{cubosdedieu}
V_{n,J}(x)=\widetilde{V}_{n,J}(x_J)\times \mathbb{T}^{J,\omega}, \quad (n\in\mathbb{N}; \ J\in\mathcal{F}),
\end{equation}
where $\widetilde{V}_{n,J}(x_J)\subset \mathbb{T}^J$ is the cube of center $x_J$ and side $1/n$.

Dieudonn\'{e} defines a measurable set for whose characteristic function $f$, the means
$f_{V_{n,J}(x)}$ cannot converge a.e. to $f(x)$ according to the directed set  $\mathbb{N}\times\mathcal{F}$.

\smallskip
(4) A differentiation basis $\mathcal{B}$ satisfies the \emph{density property} if $\mathcal{B}$ differentiates $\chi_E$ for each measurable set $E$, i.e.  for almost every $x\in\mathbb{T}^\omega$ we have, if $\{I_k\}$ is any arbitrary sequence of $\mathcal{B}(x)$ contracting to $x$,
$$
\lim_{k\to\infty}\frac{m(E\cap I_k)}{m(I_k)}=\chi_E(x)
$$
(\cite[p. 227]{busfeller}, \cite[p. 30]{hayes_pauc}, \cite[III.1]{guzmanyellow}).
From our proof of Theorem \ref{contradif} it follows that the basis $\mathcal{R}^\ast$  does not satisfies the density property.

In fact, it holds  the following result (which for the space  $\mathbb{R}^n$ can be found, for instance, in  \cite[III, Thm. 1.4]{guzmanyellow}):
\emph{The basis  $\mathcal{B}$ differentiates $L^\infty(\mathbb{T}^\omega)$ if and only if satisfies the density property} \cite[Num. 11, C$\Leftrightarrow$D]{depossel}.



\end{document}